\newtheorem{theorem}{Theorem}[section]
\newtheorem{lemma}[theorem]{Lemma}
\newtheorem{prop}[theorem]{Proposition}
\newtheorem{cor}[theorem]{Corollary}
\theoremstyle{definition}
\newtheorem{definition}[theorem]{Definition}
\theoremstyle{remark}
\newtheorem{rmk}[theorem]{Remark}
\numberwithin{equation}{section}
\DeclareMathOperator{\GL}{GL}
\DeclareMathOperator{\Ort}{O}
\DeclareMathOperator{\SO}{SO}
\DeclareMathOperator{\PO}{PO}
\DeclareMathOperator{\rk}{rank}
\DeclareMathOperator{\vol}{vol}
\newcommand{\disc}{\mathrm{disc}}
\newcommand{\Z}{\mathbb Z}
\newcommand{\Q}{\mathbb Q}
\newcommand{\R}{\mathbb R}
\newcommand{\C}{\mathbb C}
\renewcommand{\O}{\mathcal O}
\newcommand{\Hy}{\mathbf H}
\newcommand{\Hct}{H_{\mathrm{ct}}}
\renewcommand{\P}{\mathbf P}
\newcommand{\G}{\mathbf G}
\newcommand{\sG}{\widetilde{\mathbf{G}}}
\newcommand{\sC}{C}
\newcommand{\ZZ}{\mathbf Z}
\newcommand{\UU}{\mathbf{U}}
\newcommand{\CC}{\mathcal C}
\newcommand{\Bl}{\mathcal B}
\newcommand{\bs}{\backslash}
\newcommand{\Ad}{\mathrm{Ad}}
\newcommand{\g}{\mathfrak{g}}
\begin{document}
\title[On volumes of quasi-arithmetic hyperbolic lattices]{On volumes of quasi-arithmetic \\ hyperbolic lattices}

\begin{abstract} We prove that the covolume of any quasi-arithmetic hyperbolic
        lattice (a notion that generalizes the definition of arithmetic
        subgroups) is a rational multiple of the covolume of an arithmetic
        subgroup. As a corollary, we obtain a good description for the shape of
        the volumes of most  of the known hyperbolic $n$-manifolds with $n >3$.
\end{abstract}

\author{Vincent Emery}
\thanks{This work is supported by the Swiss National Science Foundation, Project number PP00P2\_157583}

\address{
Universit\"at Bern\\
Mathematisches Institut\\
Sidlerstrasse 5\\
CH-3012 Bern\\
Switzerland
}
\email{vincent.emery@math.ch}

\date{\today}


\maketitle

\section{Introduction}
\label{sec:intro}

\subsection{}

Let $\Hy^n$ be the hyperbolic $n$-space, with group of isometries $G =
\PO(n,1)$.  Let $k$ be a number field with ring of integers $\O_k$.  An
absolutely simple
adjoint algebraic $k$-group $\G$ will be called {\em admissible} (for $G$)  if
$\G(k\otimes_\Q \R) \cong G \times K$, where $K$ is compact (possibly trivial).
Assuming $n \ge 4$ this forces $k$ to be totally real, and we can fix an
inclusion  $k \subset \R$ such that $G = \G(\R)$.  By the theorem of Borel and
Harish-Chandra, any subgroup $\Gamma_0 \subset \G(\R)$ commensurable with
$\G(\O_k)$ is a lattice in $G$. Such a subgroup is called \emph{arithmetic}.
Since we assume that $\G$ is adjoint, we have necessarily $\Gamma_0 \subset
\G(k)$; see \cite[Proposition 1.2]{BorPra89}. 

Following Vinberg \cite{Vinb67} we call \emph{quasi-arithmetic} a lattice of $G$ 
that is obtained as a subgroup of $\G(k)$ for $\G$ admissible. We call
``properly quasi-arithmetic'' such a lattice if it is not arithmetic. First examples 
were obtained by Vinberg, who considered reflection groups \cite{Vinb67}.
The construction of Belolipetsky and Thomson \cite{BelThom11} proves the
existence of infinitely many commensurability classes of  
properly quasi-arithmetic hyperbolic lattices in any dimension $n>2$; see
Thomson \cite{Thoms}.

\begin{rmk}
        \label{rmk:defns-coincide}
        Suppose that $\Gamma' \subset \G(\R)$ is commensurable with the quasi-arithmetic
        lattice $\Gamma \subset \G(k)$. Then it follows from the work of Vinberg
        \cite{Vinb71} that
        $\Gamma' \subset \G(k)$ (using the fact that $\G$ is adjoint). In particular, this shows that
        our definition of quasi-arithmeticity coincides with the one from  \cite{Vinb67} and \cite{Thoms}.
\end{rmk}

\begin{rmk}
        \label{rmk:G-k-always-possible}
        It follows from Weil's local rigidity theorem that any lattice $\Gamma
        \subset G$ can be embedded in $\G(k)$ for $k$ some number field and $\G$
        some $k$-group such that $\G(\R) = G$ (see \cite[Ch.~1 Sect.~6.2]{OniVinb-I}). 
        However, in general $\G$ does not have to be admissible.
\end{rmk}

\subsection{}
It is clear from the definition that the covolume of an arithmetic  
subgroup $\Gamma_0 \subset \G(k)$ is commensurable with the
covolume of $\G(\O_k)$ (as lattices in $\G(\R)$). In this paper we prove 
that this holds for any quasi-arithmetic lattice $\Gamma \subset \G(k)$.

\begin{theorem}
      \label{thm:vol-prop}
      Let $\Gamma \subset \G(k)$ be a quasi-arithmetic lattice.
      Then the covolume of $\Gamma$ is a rational multiple of 
      the covolume of $\G(\O_k)$.
\end{theorem}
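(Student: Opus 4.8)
The plan is to interpret each covolume as the value of a single fixed cohomology class on the fundamental class of the lattice, and then to exploit the fact that both lattices live in the common group $\G(k)$, so that their fundamental classes lie in one rational homology group. Concretely, write $G = \G(\R) = \PO(n,1)$ and recall that its continuous cohomology is that of the compact dual $S^n$ of $\Hy^n$, so that $\Hct^n(G;\R) \cong \R$ is one-dimensional, spanned by the class $\omega$ of the invariant volume form. For any lattice $\Lambda \subset G$ the hyperbolic covolume of $\Lambda$ is, up to a universal normalizing constant, the pairing $\langle [\Lambda], \omega\rangle$ of $\omega$ with the fundamental class $[\Lambda]$ of the orbifold $\Lambda \backslash \Hy^n$ (van Est--Dupont); passing to a torsion-free finite-index subgroup and dividing by the index makes $[\Lambda] \in H_n(\Lambda;\Q)$ and the pairing well defined.

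First I would push everything into the discrete group $\G(k)$. Both $\Gamma$ and the reference arithmetic lattice $\Delta := \G(\O_k)$ are subgroups of $\G(k)$ (this is the defining feature of quasi-arithmeticity, and indeed $\G(k)$ is the commensurator $\mathrm{Comm}_G(\Delta)$), so their fundamental classes map to classes $[\Gamma], [\Delta] \in H_n(\G(k);\Q)$, while the volume class restricts to a single class $\omega_k \in H^n(\G(k);\R)$ under the comparison map $\Hct^n(G;\R) \to H^n(\G(k)^\delta;\R)$. The two covolumes are then $c\,\langle [\Gamma], \omega_k\rangle$ and $c\,\langle [\Delta], \omega_k\rangle$ with the same constant $c$. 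Thus the theorem would follow from the rationality of the \emph{ratio} of these two real numbers, for which it suffices to show that $\omega_k$ is a real multiple of a $\Q$-rational class, say $\omega_k = \lambda\,\omega_\Q$ with $\omega_\Q$ in the image of $H^n(\G(k);\Q) \to H^n(\G(k);\R)$: then $\langle [\Gamma], \omega_k\rangle / \langle [\Delta], \omega_k\rangle = \langle [\Gamma], \omega_\Q\rangle / \langle [\Delta], \omega_\Q\rangle$ is a ratio of rational numbers. Note that the individual covolumes are typically transcendental, carried by the period $\lambda$, but $\lambda$ cancels.

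For $n$ even this rationality is classical: the volume class is proportional to the Euler class $e$, which lies in $H^n(\G(k);\Z)$, and $\langle [\Lambda], e\rangle = \chi(\Lambda \backslash \Hy^n)$ is the rational orbifold Euler characteristic (Gauss--Bonnet); here $\omega_\Q = e$ and $\lambda$ is the universal Gauss--Bonnet constant. The main obstacle is the \emph{odd-dimensional} case, where $e = 0$ and this shortcut is unavailable. There I would argue that $\omega_k$ is nevertheless proportional to a $\Q$-rational class: the volume class is a Borel/Cheeger--Simons-type class attached to the algebraic group $\G$, hence defined over $\overline{\Q}$ by the algebraic de Rham--Betti comparison, and $\Gal(\overline{\Q}/\Q)$-equivariance of this comparison (using that $\G$ is defined over the number field $k$) should pin it down, up to the transcendental period $\lambda$, on a line defined over $\Q$. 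Making this last step rigorous for the \emph{discrete} group $\G(k)$ — rather than for the arithmetic group $\Delta$, where Borel's regulator theorem already supplies exactly such rationality — and controlling the pairing against the fundamental class of a possibly non-arithmetic (thin) lattice $\Gamma$ is, I expect, the crux of the argument.
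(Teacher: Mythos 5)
Your overall skeleton---push both fundamental classes into the homology of the common group $\G(k)$, pair with a single volume class, and deduce rationality of the ratio---is indeed the paper's strategy (for uniform lattices it is essentially the argument sketched in Section \ref{sec:strategy-unif}). But the two steps carrying all the weight are missing, and one of them is set up in a way that fails. First, the nonuniform case: if $\Gamma$ has cusps, then $M=\Gamma\bs\Hy^n$ is an open $n$-manifold, so $H_n(\Gamma;\Q)\cong H_n(M;\Q)=0$ and there is no fundamental class in ordinary group homology; passing to a torsion-free finite-index subgroup does not help. The pairing $\langle[\Lambda],\omega\rangle$ you invoke is simply undefined here. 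The paper's answer is to work with homology relative to the boundary, $H_n(\Gamma,\Omega)=H_{n-1}(\Gamma,J\Omega)$ (Section \ref{sec:fund-class}), and then---this is the technical heart of the paper---to show that the cusp obstruction dies: by Shapiro's lemma the obstruction to pulling $[\Gamma]$ into the image of $H_n(\G(\Q))$ lies in $H_{n-1}(\G(\Q),\Z\Omega)=H_{n-1}(\P(\Q))$, and Sections \ref{sec:struct-cusps}--\ref{sec:conclusion} prove (via the Bieberbach theorem and the fact that the split torus $\ZZ$ in the Levi of $\P$ scales $H_{n-1}(\UU(\Q))\cong\Q$ by $\lambda(g)^{n-1}$, killing the coinvariants) that a fixed integer $N$ annihilates it (Propositions \ref{prop:Gamma_x-torsion} and \ref{prop:phi-0}). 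Some substitute for this analysis is indispensable; without it your argument does not even begin in the noncompact case.

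Second, the step you yourself identify as the crux---that for odd $n$ the restricted volume class $\omega_k\in H^n(\G(k);\R)$ is a real multiple of a rational class---is left as a hope about Galois equivariance of Cheeger--Simons-type classes, and I do not see how to complete it along those lines for the full discrete group $\G(k)$: for the arithmetic group $\Delta$ itself such rationality is Borel's regulator theorem, but that is not enough, since the comparison of $[\Gamma]$ with $[\Delta]$ must take place over $\G(k)$. The paper supplies exactly this input, in dual form, by citing the Borel--Yang theorem (Theorem \ref{thm:Borel-Yang}, resting on work of Borel, Garland, and Blasius--Franke--Grunewald in the isotropic case): the comparison map $\Hct^\bullet(\sG(k\otimes_\Q\R),\R)\to H^\bullet(\sG(k),\R)$ is an isomorphism, and since admissibility forces the compact dual symmetric space to be $S^n$, one concludes that $H_n(\G(k);\R)$ is one-dimensional (Proposition \ref{prop:Borel-rk-1}, after transferring from $\sG$ to the adjoint group by spectral-sequence arguments). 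Rank one is equivalent, by universal coefficients, to your statement that every class in $H^n(\G(k);\R)$ is a real multiple of a rational one; so your reduction is sound, but this theorem is the missing load-bearing ingredient, not a routine verification, and your proposal as written does not reach it.
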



For $n$ even this is an obvious consequence of the generalized Gauss-Bonnet
formula. However, we obtain the result as a consequence of Theorem
\ref{thm:fund-class} below, which is of interest for even dimensions as well.
In this stronger form our theorem also has application to the study of
arithmetic lattices (see Corollary \ref{cor:integ-mult}).
Moreover -- and despite the seemingly particular nature of properly
quasi-arithmetic lattices -- we will show how this notion permits to better
understand both arithmetic lattices (Corollary \ref{cor:main-thm})
and non-quasi-arithmetic lattices (Section \ref{sec:list-maniflds}).

\begin{rmk} \label{rmk:dim-3} 

        For $n = 3$ the result stated in Theorem \ref{thm:vol-prop} appears as
        a particular consequence of the known theory about the Bloch invariant;
        see \cite{NeuYan99}  and \cite[Section~12.7]{MaclReid03}. 
        Very briefly,
        for a hyperbolic $3$-manifold $M$ its {\em Bloch invariant} $\beta(M)$ takes value
        in the {\em Bloch group} $\Bl(k) \otimes \Q$, which has dimension $1$
        when $\G$ is admissible (in this case the {\em invariant trace field}
        $k$ must have exactly one complex place).  The result then follows by
        applying the Borel regulator on $\beta(M)$ (which gives the volume).
        The theory has been generalized for higher dimensions
        \cite[Section~8]{NeuYan99}, however with $\beta(M)$ taking values in
        higher (pre-)Bloch groups of $\C$ (or $\overline{\Q}$) -- instead of
        $k$.
        This does not permit direct volume comparisons, the vector spaces involved
        having infinite dimensions.
        See also Goncharov \cite{Gonch99}, who defines a
        similar invariant in the $K$-theory groups $K_n(\overline{\Q}) \otimes
        \Q$.
\end{rmk}

The covolume of $\G(\O_k)$ can be computed up to a rational
in terms of invariants of $\G$ and $k$ (see Ono \cite{Ono66},  and Prasad \cite{Pra89}).
We discuss in Section \ref{sec:example} the
particular case of nonuniform lattices. We also include there numerical
comparisons for two properly quasi-arithmetic lattices 
-- two reflection groups in dimensions $5$ and $7$ --
that illustrates Theorem \ref{thm:vol-prop}. 

\subsection{}

As a corollary of Theorem \ref{thm:vol-prop} we obtain the following
result.  We do not know if it can be obtained by a more direct proof --
even in the arithmetic case. 

\begin{cor}
       Let $M = \Gamma \bs \Hy^n$ be an orientable hyperbolic manifold with
       $\Gamma \subset \G(k)$ quasi-arithmetic. Suppose that $M$ contains a 
       totally geodesic (connected) separating hypersurface $S$ of finite volume,
       and let $M^+ \subset M$ be one of the two parts delimited by $S$. 
       We denote by $H \subset \Hy^n$ the hyperplane that covers $S$, and we suppose
       that  the reflection through $H$ belongs to $\G(k)$.  
       Then $\vol(M^+)$ is a rational multiple of $\vol(M)$.
       \label{cor:main-thm}
\end{cor}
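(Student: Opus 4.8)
The plan is to exhibit a second quasi-arithmetic lattice whose covolume records $\vol(M^+)$, and then to compare both it and $\Gamma$ with $\G(\O_k)$ by means of Theorem \ref{thm:vol-prop}. Write $\Gamma^+ = \pi_1(M^+)$. Since $S$ is totally geodesic it is $\pi_1$-injective, so the inclusion $M^+ \hookrightarrow M$ embeds $\Gamma^+$ as a subgroup of $\Gamma$, and hence $\Gamma^+ \subset \G(k)$. Let $r_H$ be the reflection through $H$, which lies in $\G(k)$ by hypothesis, and set
\[ \Gamma^* = \langle\, \Gamma^+,\ r_H\, \Gamma^+\, r_H^{-1} \,\rangle. \]
Because $\G(k)$ is a group containing both $r_H$ and $\Gamma^+$, we have $\Gamma^* \subset \G(k)$. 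This containment --- which is exactly what the hypothesis $r_H \in \G(k)$ buys us --- is the heart of the construction.

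Geometrically, $\Gamma^*$ is intended to be the fundamental group of the double $DM^+$ of $M^+$ across its totally geodesic boundary $S$: the two copies of $\Gamma^+$ correspond to the two copies of $M^+$, glued along the common subgroup $\pi_1(S)$. Since $M$ is orientable and $S$ is a finite-volume, embedded, connected, separating, totally geodesic hypersurface, the lifts of $S$ to $\Hy^n$ are pairwise disjoint hyperplanes, and doubling along $S$ should produce a finite-volume hyperbolic $n$-manifold with $\vol(DM^+) = 2\,\vol(M^+)$. I would make this precise through the Poincar\'e polyhedron theorem (or, equivalently, the developing map of the glued hyperbolic structure), which shows that $\Gamma^*$ is discrete with $\vol(\Gamma^* \bs \Hy^n) = 2\,\vol(M^+) < \infty$; thus $\Gamma^*$ is a lattice in $G$. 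As $\G$ is admissible and $\Gamma^* \subset \G(k)$, the lattice $\Gamma^*$ is quasi-arithmetic.

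With this in hand the corollary follows quickly. By Theorem \ref{thm:vol-prop} applied to $\Gamma$ and to $\Gamma^*$, both covolumes are rational multiples of the covolume of $\G(\O_k)$: there are $a, b \in \Q$ with $\vol(M) = \vol(\Gamma \bs \Hy^n) = a\,\vol(\G(\O_k) \bs \Hy^n)$ and $2\,\vol(M^+) = \vol(\Gamma^* \bs \Hy^n) = b\,\vol(\G(\O_k) \bs \Hy^n)$. Eliminating the common factor gives $\vol(M^+) = (b/2a)\,\vol(M)$, a rational multiple of $\vol(M)$.

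The main obstacle is the geometric input of the second paragraph: verifying that the abstractly defined group $\Gamma^*$ is genuinely discrete and that its quotient is the honest double $DM^+$ of finite volume, with the expected covolume. All the hypotheses on $S$ (embedded, connected, separating, totally geodesic, of finite volume) and the orientability of $M$ feed into this step --- they guarantee that distinct lifts of $S$ do not intersect and that the two half-sides glue up isometrically without creating cone singularities, so that no delicate Klein-combination estimate is needed beyond the standard doubling argument. Everything else is formal once $r_H \in \G(k)$ places $\Gamma^*$ inside $\G(k)$.
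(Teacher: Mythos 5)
Your proposal is correct and follows essentially the same route as the paper: both construct the double of $M^+$ along $S$, identify its fundamental group as the subgroup generated by $\Gamma^+$ and $r_H\,\Gamma^+\,r_H^{-1}$ inside $\G(k)$, observe it is a quasi-arithmetic lattice of finite covolume $2\vol(M^+)$, and then apply Theorem \ref{thm:vol-prop} to compare both covolumes with that of $\G(\O_k)$. The paper states the doubling step more briefly (simply asserting that the glued manifold $V$ is complete hyperbolic of finite volume), whereas you flesh out the discreteness verification, but the argument is the same.
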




\begin{proof}
Let us write $\Gamma^+ \subset \Gamma$ for the subgroup corresponding to the
fundamental group of $M^+$.
We consider the manifold $V$ that is obtained by gluing two copies of $M^+$
along the boundary $S$. We have that $V$ is a complete hyperbolic manifold
that can be written as  $V = \Lambda \bs \Hy^n$, where $\Lambda \subset \G(\R)$
is the subgroup generated by $\Gamma^+ \cup g \Gamma^+ g^{-1}$ and 
$g$ is the reflection through $H$.
Obviously $V$ has finite volume, so that $\Lambda \subset \G(k)$ is a
quasi-arithmetic lattice.  By Theorem \ref{thm:vol-prop} we conclude that $\vol(V) =
2 \vol(M^+)$ is a rational multiple of $\vol(M)$.  
\end{proof}

\subsection{}
\label{sec:list-maniflds}

The following list covers all  the {\em currently known}
hyperbolic lattices  for $n > 3$ (up to commensurability). We indicate
the relevant information about their volumes (for $n$ odd):
\begin{enumerate}
        \item Arithmetic lattices, for which the volumes are precisely
                computed up to rationals (see \cite{Ono66,Pra89} and Section
                \ref{sec:example}). 
        \item Quasi-arithmetic lattices; Theorem \ref{thm:vol-prop} shows that
                up to commensurability their covolumes are the same as in (1).
       \item Lattices that come from the {\em interbreeding} constructions 
               (see Gromov and Piateski-Shapiro \cite{GroPS87},
               and generalizations \cite{Raim12,GelLev14}); note that those are
               not quasi-arithmetic \cite[Theorem 1.6]{Thoms}. 
               From their construction and  Corollary \ref{cor:main-thm}
               we obtain that their volumes are rational linear combinations 
               of volumes from (1), in any case (the result being already clear
               when the construction only involves nonseparating hypersurfaces). 
        \item Non-quasi-arithmetic reflection groups; we do not have any
                information about what shape their volumes can take in general.
                Note however that some of these groups are obtained by the
                interbreeding construction; see for instance Vinberg
                \cite{Vinb15}.

\end{enumerate}

\begin{rmk}
For $n = 3$ hyperbolic manifolds can be obtained by performing Dehn filling on
link complements, and this possibly provides lattices that are not of any of
the types listed above. 
\end{rmk}

\subsection{}
\label{sec:strategy-unif}

The idea of the proof of Theorem \ref{thm:vol-prop} is easily summarized in the case when $\Gamma$ is
uniform. Up to passing to a subgroup of finite index, we may assume that
$\Gamma$ is torsion-free and contained in the connected component $G^\circ$. 
Then $M = \Gamma \bs \Hy^n$ is a compact orientable manifold, and this  
provides a ``fundamental class'' $[\Gamma] \in H_n(\Gamma) \cong H_n(M) \cong \Z$ (which
corresponds to the generator with the same orientation as $\Hy^n$).
Denoting by $j: \Gamma \to G$ the inclusion map,  
the result follows immediately from the two following observations:
\begin{enumerate}
        \item the induced map $j_*$ on group homology factors as
                $H_n(\Gamma) \to H_n(\G(k)) \to H_n(G)$, 
                and the middle term is known to have rank one; see Proposition
                \ref{prop:Borel-rk-1}.
        \item there is a linear map $v: H_n(G) \to \R$ (independent of $\Gamma$) 
                such that $v(j_*([\Gamma])) = \vol(M)$; see Section \ref{sec:fund-class}.
\end{enumerate}

\subsection{}
\label{sec:intro-relative-homol}

The main effort of this article is to present a proof that includes the more
difficult case of  nonuniform lattices.
We first need to recall a convenient way to consider the fundamental 
class in the general case. We refer to Section \ref{sec:fund-class} for details. 
Let $\Omega = \partial \Hy^n$ be the geometric boundary of
$\Hy^n$, endowed with the $G$-action. For a subgroup $S \subset G$, we define
its homology \emph{relative to} $\Omega$ by 
\begin{align}
        H_n(S, \Omega) &= H_{n-1}(S, J\Omega),
        \label{relat-homology}
\end{align}
where $J\Omega$ is the kernel of the augmentation map $\Z \Omega \to \Z$ (in
particular we can see $J\Omega$ as an $S$-module).
For any torsion-free lattice $\Gamma \subset G^\circ$ we have $H_n(\Gamma,
\Omega) \cong \Z$, and we denote by $[\Gamma] \in H_n(\Gamma, \Omega)$ 
the generator corresponding to the positive orientation 
(see Definition \ref{def:fund-class} and Remark \ref{rmk:CC-Omega-orient}).
For $\Gamma$ uniform
there is a canonical isomorphism $H_n(\Gamma, \Omega) \cong H_n(\Gamma)$ and we
recover the usual notion of the fundamental class.

For any lattice $\Gamma \subset G$ we consider the map $j_*: H_n(\Gamma, \Omega)
\to H_n(G, \Omega)$ on the relative homology, induced by the inclusion $j:
\Gamma \to G$.  The following result implicitly appears  
in Neumann and Yang \cite[Sect.~3--4]{NeuYan99}. We discuss the proof
in Section \ref{sec:fund-class}.

\begin{prop}
        \label{prop:class-determ-vol}
        There exists a linear map $v: H_n(G, \Omega) \to \R$ such that for any
        torsion-free lattice $\Gamma \subset G^\circ$ one has $v(j_*([\Gamma])) =
        \vol(\Gamma \bs \Hy^n)$.
\end{prop}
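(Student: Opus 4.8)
The plan is to realize $v$ as evaluation against a fixed relative cohomology class represented by the hyperbolic volume cocycle; the substance of the proposition is then the assertion that this evaluation returns the Riemannian volume on fundamental classes.

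First I would construct the volume cocycle. For ideal points $\xi_0, \dots, \xi_n \in \Omega$ let $\mathrm{Vol}(\xi_0, \dots, \xi_n) \in \R$ be the signed hyperbolic volume of the geodesic ideal $n$-simplex with vertices $\xi_0, \dots, \xi_n$, the sign being fixed by the orientation of $\Hy^n$. This quantity is finite -- ideal simplices have finite volume, bounded uniformly by the volume $v_n$ of the regular ideal simplex -- and the function $\mathrm{Vol}$ is alternating and invariant under $G^\circ$, since orientation-preserving isometries preserve signed volume. It is a cocycle: for ideal points $\xi_0, \dots, \xi_{n+1}$ one has $\sum_{i=0}^{n+1} (-1)^i \mathrm{Vol}(\xi_0, \dots, \widehat{\xi_i}, \dots, \xi_{n+1}) = 0$, which is Stokes' theorem for the volume $n$-form $\omega$ over the straight $(n+1)$-simplex on $\xi_0, \dots, \xi_{n+1}$: that chain lies in the $n$-dimensional space $\Hy^n$, where $d\omega = 0$ for degree reasons, so the integral of $\omega$ over its boundary -- exactly the alternating sum above -- vanishes. (The orientation-reversing component of $G$ sends $\mathrm{Vol}$ to $-\mathrm{Vol}$; this causes no trouble below, since the fundamental classes we evaluate come from $\Gamma \subset G^\circ$.)

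Next I would turn $\mathrm{Vol}$ into the linear map $v$. Using the short exact sequence $0 \to J\Omega \to \Z\Omega \to \Z \to 0$ and the complex of $\Omega$-simplices computing the relative homology of \eqref{relat-homology}, the invariant cocycle $\mathrm{Vol}$ represents a class in the relative cohomology $H^n(G, \Omega; \R)$ dual to $H_n(G, \Omega)$; I set $v = \langle [\mathrm{Vol}], - \rangle$. Because $\mathrm{Vol}$ is a cocycle it vanishes on boundaries, so $v$ is well defined on homology, and because $\mathrm{Vol}$ is fixed once and for all -- independent of any lattice -- so is $v$. I would follow Neumann and Yang \cite[Sect.~3--4]{NeuYan99} for the precise identification of $\mathrm{Vol}$ with a class in the relative cohomology. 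It then remains to compute $v(j_*([\Gamma]))$. By the construction of the fundamental class (Definition \ref{def:fund-class}), $[\Gamma]$ is represented by a relative cycle which, after straightening, is a fundamental cycle of $M = \Gamma \bs \Hy^n$ assembled from geodesic ideal simplices; pushing it into $H_n(G, \Omega)$ and pairing with $\mathrm{Vol}$ yields the signed sum of the volumes of these simplices. Since straightening does not change the homology class, the straight cycle still represents the fundamental class of $M$, whence pairing with $\omega$ gives $\int_M \omega = \vol(M)$. Hence $v(j_*([\Gamma])) = \vol(\Gamma \bs \Hy^n)$, as required.

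The delicate point, and the whole reason for the relative formulation \eqref{relat-homology}, is the nonuniform case. When $M$ is noncompact the fundamental cycle is only a \emph{relative} cycle: its boundary is pushed out to the cusps, that is, to the ideal boundary $\Omega$. One must then check both that the pairing with $\mathrm{Vol}$ converges -- guaranteed by the uniform bound $\mathrm{Vol} \le v_n$ on ideal-simplex volumes -- and that the resulting finite number is exactly $\vol(M)$, with the boundary contributions at the cusps cancelling in the relative cycle rather than being lost. The coefficient module $J\Omega$ is engineered precisely so that the ideal vertices absorb the cusps and these boundary terms cancel; verifying this bookkeeping is where I would rely most heavily on the arguments of \cite[Sect.~3--4]{NeuYan99}.
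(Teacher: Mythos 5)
Your proposal is essentially the paper's own proof, phrased dually: the paper builds the complex $S_\bullet(\Omega)$ of ideal simplices, observes that $S_{\bullet \ge 1}(\Omega)$ is an acyclic $\Z G$-resolution of $J\Omega$ so that comparison of resolutions yields a canonical map $\iota_*: H_n(G,\Omega) \to H_n(S_\bullet(\Omega)_G)$, and sets $v = \nu_* \circ \iota_*$ where $\nu$ is exactly your signed-volume cocycle $\mathrm{Vol}$; the key evaluation $v(j_*([\Gamma])) = \vol(\Gamma\bs\Hy^n)$ is then, just as in your attempt, delegated to Neumann--Yang \cite{NeuYan99}. The one point of divergence is your parenthetical on orientation: you correctly note that $\mathrm{Vol}$ is only $G^\circ$-invariant (the paper instead asserts $G$-invariance of $\nu$, which is false for \emph{signed} volume), but your dismissal does not fully settle it either --- the issue is not which lattices $\Gamma$ one evaluates on, but whether $v$ is defined on $H_n(G,\Omega)$ for the full two-component group $G$, as the statement (and its later use for $H_n(\G(k),\Omega)$ with $\G(k) \not\subset G^\circ$) requires.
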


\subsection{}

We will write $\G(k)^+$ for the intersection $\G(k) \cap G^\circ$. After passing to a
finite index torsion-free subgroup, Theorem \ref{thm:vol-prop} is a direct consequence of
the following result, together with Proposition \ref{prop:class-determ-vol}. 

\begin{theorem}
        Let $\G$ be an admissible $k$-group. There exists a rank one
        $\Z$-sub\-module  $L \subset H_n(G, \Omega)$ such that $j_*([\Gamma]) \in L$ for
        any torsion-free quasi-arithmetic lattice $\Gamma \subset \G(k)^+$.
        \label{thm:fund-class} 
\end{theorem}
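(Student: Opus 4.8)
The plan is to mimic the uniform-case strategy from Section~\ref{sec:strategy-unif}, but now in relative homology and using a factorization through the homology of $\G(k)^+$ whose rank-one part must be controlled. The starting point is the observation already flagged in the uniform case: the inclusion $j \colon \Gamma \to G$ factors through $\G(k)^+$, so the induced map on relative homology factors as
\begin{align}
        H_n(\Gamma, \Omega) \to H_n(\G(k)^+, \Omega) \to H_n(G, \Omega).
        \label{eqn:factorization}
\end{align}
Thus $j_*([\Gamma])$ lies in the image $L := \im\bigl(H_n(\G(k)^+, \Omega) \to H_n(G, \Omega)\bigr)$, and the whole theorem reduces to showing that this image $L$ is a $\Z$-submodule of rank one (it is automatically independent of $\Gamma$, being defined purely in terms of $\G$ and $k$). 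So the task is to control the relative homology group $H_n(\G(k)^+, \Omega)$, or at least the rank of its image in $H_n(G, \Omega)$.

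First I would make the relative homology concrete: by the defining identity \eqref{relat-homology}, $H_n(\G(k)^+, \Omega) = H_{n-1}(\G(k)^+, J\Omega)$, and I would unwind the long exact sequence of the pair associated to the short exact sequence $0 \to J\Omega \to \Z\Omega \to \Z \to 0$ of $\G(k)^+$-modules. This produces a long exact sequence relating $H_*(\G(k)^+)$, $H_*(\G(k)^+, \Omega)$, and $H_*(\G(k)^+, \Z\Omega)$. The boundary $\Omega = \partial \Hy^n$ is the set of cusps / limit points, and $\Z\Omega$ decomposes over the $\G(k)^+$-orbits of boundary points into induced (permutation) modules, so by Shapiro's lemma $H_*(\G(k)^+, \Z\Omega)$ is computed from the homology of the stabilizers — the parabolic / cusp subgroups. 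The plan is then to read off that the only contribution to $H_n(\G(k)^+, \Omega)$ surviving into $H_n(G, \Omega)$ comes from the absolute homology $H_n(\G(k)^+)$, whose cuspidal/parabolic corrections have a controlled rank.

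The key input for the rank-one conclusion is a Borel-type stability and rank computation: I would invoke Proposition~\ref{prop:Borel-rk-1} (the statement that the relevant degree-$n$ homology of $\G(k)$, appropriately interpreted, has rank one), which rests on Borel's theorem on the stable cohomology of arithmetic-type groups and the fact that $\G(\R) = G \times K$ with $G = \PO(n,1)$ contributes a single primitive class in degree $n$ when $n$ is odd (and the even case being handled by Gauss--Bonnet as noted). Concretely, the real cohomology $H^*_{\mathrm{ct}}(G)$ of the real group, via the Matsushima/van~Est comparison, has its degree-$n$ part one-dimensional, and Borel's argument shows this class pulls back to a rank-one subspace of $H^n(\G(k), \R)$; dually, the image $L$ in $H_n(G, \Omega)$ has rank one. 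The passage from $\G(k)$ to $\G(k)^+$ (a finite-index issue) and from absolute to relative homology only affects the group by finite or torsion data on the level of ranks, so the rank-one conclusion is preserved.

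The main obstacle will be the nonuniform (cusped) case: I must ensure that the relative homology $H_n(\G(k)^+, \Omega)$, rather than the absolute $H_n(\G(k)^+)$, still has rank-one image in $H_n(G, \Omega)$. The danger is that the parabolic subgroups stabilizing boundary points could inject extra classes into degree $n-1$ of $H_*(\G(k)^+, J\Omega)$. The resolution is that these cusp contributions live in the image of the lower-rank parabolic homology and either vanish or map trivially under the comparison to $H_n(G, \Omega)$ — this is exactly the mechanism implicit in Neumann and Yang \cite[Sect.~3--4]{NeuYan99} that underlies Proposition~\ref{prop:class-determ-vol}. The technical heart is therefore to show that the relative fundamental class $[\Gamma]$ maps into the rank-one piece coming from the stable class $v$, uniformly over all quasi-arithmetic $\Gamma$, and that the cuspidal corrections do not enlarge the relevant rank. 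Once the image $L$ is pinned down as this single rank-one $\Z$-module, independence of $\Gamma$ is automatic and the theorem follows.
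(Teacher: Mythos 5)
Your overall skeleton (factor $j_*$ through the $k$-points, use the coefficient sequence $0 \to J\Omega \to \Z\Omega \to \Z \to 0$, compute the $\Z\Omega$-term by Shapiro's lemma, and feed in Proposition~\ref{prop:Borel-rk-1} for the rank-one input) is the same as the paper's, but the proposal has a genuine gap exactly at the point you yourself identify as ``the technical heart'': the cusp contributions. You assert that the classes coming from the boundary stabilizers ``either vanish or map trivially under the comparison to $H_n(G,\Omega)$'' and attribute this to a mechanism implicit in Neumann--Yang; no such mechanism is available there, and the assertion as stated is not what is true. In the paper the cusp classes $[\Gamma_x] \in H_{n-1}(\Gamma_x) \cong \Z$ do \emph{not} die in $H_{n-1}(\P(\Q))$: they are only annihilated by a \emph{uniform} integer $N$. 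Proving this requires the entire algebraic analysis of Section~\ref{sec:struct-cusps}: the parabolic $\P = \UU \cdot \mathbf{L}$ at a cusp has a one-dimensional $\Q$-\emph{split} central torus $\ZZ$ (Proposition~\ref{prop:action-center}, using $k=\Q$), whose conjugation action scales $H_{n-1}(\UU(\Q)) \cong \Q$ by $\lambda(g)^{n-1}$; since conjugation acts trivially on $H_{n-1}(\P(\Q))$, the coinvariants vanish and the image of $H_{n-1}(\UU(\Q))$ in $H_{n-1}(\P(\Q))$ is zero (Lemma~\ref{lemma:homology-cusp-trivialized}). Then Bieberbach's theorem gives an integer $N$, independent of $\Gamma$ and of the cusp, with $[\Gamma_x : \Gamma_x'] \mid N$ for the translation subgroup $\Gamma_x' \subset \UU(\Q)$, whence $N \cdot [\Gamma_x] \mapsto 0$ in $H_{n-1}(\P(\Q))$ (Proposition~\ref{prop:Gamma_x-torsion}). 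This uniform torsion statement, not a vanishing statement, is what makes a single $\Z$-module work for all $\Gamma$ simultaneously, and none of it appears in your proposal.

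Your reduction is also off target in a way that matters. You define $L$ as the \emph{full} image of $H_n(\G(k)^+,\Omega)$ in $H_n(G,\Omega)$ and claim the theorem reduces to showing this has rank one. That is a much stronger statement than needed, and it is not established (and plausibly false): by Shapiro's lemma the obstruction to rank one is the image of the full stabilizer homology $H_{n-1}(\P(\Q))$ (not just of the classes $[\Gamma_x]$) in $H_{n-1}(G_x)$, and nothing in the paper or in your argument controls that group, which can be enormous. The paper instead takes $L$ to be the submodule \emph{generated by the classes} $j_*([\Gamma])$, shows $N\cdot\alpha_*([\Gamma])$ lies in $L_0 = \im\bigl(\delta\colon H_n(\G(k)) \to H_{n-1}(\G(k),J\Omega)\bigr)$, which has rank one by Proposition~\ref{prop:Borel-rk-1}, and concludes $\rk(L) = \rk(N\cdot L) \le \rk(L_1) = 1$, with nonvanishing of the rank coming from Proposition~\ref{prop:class-determ-vol}. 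Finally, a smaller but real error: you say the passage from $\G(k)$ to $\G(k)^+$ ``only affects the group by finite or torsion data on the level of ranks.'' Group homology ranks can strictly increase on finite-index subgroups, so Proposition~\ref{prop:Borel-rk-1} (which concerns $H_n(\G(k))$) does not directly bound the rank of $H_n(\G(k)^+)$; the paper avoids this by factoring through the homology of the full group $\G(k)$.
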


The proof of Theorem \ref{thm:fund-class} is established in Sections
\ref{sec:struct-cusps}--\ref{sec:conclusion}.
In view of Remark \ref{rmk:dim-3}, we will assume $n > 3$ in the proofs. This
permits to use a uniform notation (the main difference for $n=3$ is that the
field of definition $k$ is not totally real). 

\subsection{}
\label{sec:vol-integral-mult}
One feature of our approach is that the fundamental classes 
are compared in a
$\Z$-module, namely $L \subset H_n(G, \Omega)$. This contrasts with the
Bloch invariant
approach, which considers $\Q$-vector spaces (cf.\ Remark \ref{rmk:dim-3}).
To illustrate the advantage in doing so, we note the following simple
corollary  concerning the distribution of covolumes within a commensurability class
of arithmetic subgroups. Recall that such a class contains infinitely many
maximal subgroups (see \cite[Prop.~1.4~(iv)]{BorPra89}). 

\begin{cor}
        \label{cor:integ-mult}
       Let $\G$ be an admissible $k$-group.
       There exists a number $c > 0$ such that for any arithmetic subgroup
       $\Gamma \subset \G(k)$ we have that $\vol(\Gamma\bs\Hy^n)$ is an
       integral multiple of $c$.
\end{cor}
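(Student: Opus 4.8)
The plan is to read off the result from the integral structure provided by Theorem~\ref{thm:fund-class}. First I would fix a generator $\ell_0$ of the rank one $\Z$-module $L \subset H_n(G, \Omega)$ and set $c_0 := v(\ell_0)$, where $v$ is the linear map of Proposition~\ref{prop:class-determ-vol}. For any torsion-free quasi-arithmetic lattice $\Gamma \subset \G(k)^+$, Theorem~\ref{thm:fund-class} gives $j_*([\Gamma]) = m_\Gamma\, \ell_0$ with $m_\Gamma \in \Z$, and Proposition~\ref{prop:class-determ-vol} then yields
\begin{equation*}
        \vol(\Gamma \bs \Hy^n) = v(j_*([\Gamma])) = m_\Gamma\, c_0 .
\end{equation*}
In particular $c_0 \neq 0$, since any such $\Gamma$ has positive covolume, and every \emph{torsion-free} arithmetic subgroup contained in $G^\circ$ has covolume an integer multiple of $c_0$. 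This settles that case entirely; the work lies in removing the hypotheses that $\Gamma$ be torsion-free and contained in $G^\circ$.

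For an arbitrary arithmetic subgroup $\Gamma \subset \G(k)$, I would choose a torsion-free finite-index subgroup $\Gamma' \subset \Gamma$ with $\Gamma' \subset G^\circ$ (this exists by Selberg's lemma, after intersecting with $G^\circ$). Writing $d = [\Gamma : \Gamma']$, the orbifold covering $\Gamma' \bs \Hy^n \to \Gamma \bs \Hy^n$ has degree $d$, so
\begin{equation*}
        \vol(\Gamma \bs \Hy^n) = \frac{1}{d}\, \vol(\Gamma' \bs \Hy^n) = \frac{m_{\Gamma'}}{d}\, c_0 .
\end{equation*}
Thus every arithmetic covolume is a rational multiple of $c_0$, and the only remaining point is that the denominators arising here are uniformly bounded over the whole commensurability class.

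This control of denominators is the one delicate step, and I expect it to be the main obstacle: although the index $d$ is itself unbounded, the reduced denominator of $\vol(\Gamma \bs \Hy^n)/c_0$ is governed by the orders of the finite stabilizers of the orbifold $\Gamma \bs \Hy^n$ (through the additive decomposition of the orbifold volume), together with a possible factor $2$ coming from orientation-reversing elements of $\Gamma$. By Minkowski's bound on the orders of finite subgroups of $\GL_N(\O_k)$, these orders are bounded by a constant depending only on $\G$ and $k$, so the denominators divide a fixed integer $N$. Consequently the additive subgroup of $\R$ generated by all arithmetic covolumes is contained in the infinite cyclic group $\tfrac{1}{N}\, c_0 \Z$, hence is itself cyclic, say equal to $c\Z$ with $c > 0$. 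Since each arithmetic covolume lies in this subgroup, it is an integral multiple of $c$, which proves the corollary. The rank one integral structure of $L$ does all the essential work; once the Minkowski-type bound replaces the a priori unbounded $d$, the conclusion is immediate.
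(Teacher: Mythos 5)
Your first two steps are sound and essentially coincide with the paper's: Theorem~\ref{thm:fund-class} together with Proposition~\ref{prop:class-determ-vol} gives $\vol(\Gamma_0\bs\Hy^n)\in c_0\Z$ for every torsion-free lattice $\Gamma_0\subset\G(k)^+$, and passing to a torsion-free finite-index subgroup reduces the general case to controlling the denominators $m_{\Gamma'}/d$. The gap is in your final step. The assertion that the reduced denominator of $\vol(\Gamma\bs\Hy^n)/c_0$ is ``governed by the orders of the finite stabilizers \ldots{} through the additive decomposition of the orbifold volume'' is not an argument, and in the case that matters ($n$ odd; for $n$ even the corollary is easy anyway) it is not a known fact. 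For even $n$ your principle does hold, via Gauss--Bonnet and Brown's theorem on Euler characteristics of groups with torsion; but for odd $n$ the volume is not an Euler characteristic and is not locally determined by the orbifold, so stabilizer orders do not obviously bound anything: the covering relation $\vol(\Gamma\bs\Hy^n)=\vol(\Gamma'\bs\Hy^n)/d$ only exhibits the unbounded $d$ as a denominator. Making your claim rigorous would essentially require extending Theorem~\ref{thm:fund-class} to orbifold fundamental classes with $\Z[1/N]$-coefficients (note that one cannot instead argue by divisibility inside $H_n(G,\Omega)$, since the homology of $G$ as a discrete group can be highly divisible).

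There is strong evidence that this is a genuine gap and not a routine detail: by Minkowski's theorem, finite subgroups of $\G(k)\subset\GL_m(\Q)$ have uniformly bounded order for \emph{all} subgroups, arithmetic or merely quasi-arithmetic. So if your principle were true, it would immediately show that the covolumes of all quasi-arithmetic lattices with torsion lie in $\frac{c_0}{N}\Z$ --- exactly the statement the paper explicitly declares unclear in the remark following Corollary~\ref{cor:integ-mult}. The paper's proof avoids this issue by bounding, uniformly over arithmetic $\Gamma$, the \emph{index} of a torsion-free subgroup rather than the torsion orders: via $\Ad$ and restriction of scalars every arithmetic subgroup of $\G(k)$ stabilizes a $\Z$-lattice in $\g_0$ and hence embeds in one fixed $\GL_m(\Z)$, and Minkowski's lemma (the kernel of reduction mod $3$ is torsion-free) then yields a torsion-free subgroup of index at most a constant $A$ (doubled so as to land in $\G(k)^+$); all denominators therefore divide $\mathrm{lcm}(1,\dots,\lfloor A\rfloor)$. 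This is precisely where arithmeticity, as opposed to quasi-arithmeticity, enters; your proof uses arithmeticity only through bounded torsion, which is insufficient. The repair is to replace your denominator claim with this uniform index bound, after which your argument goes through.
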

\begin{proof}
   
        Since $\G$ has trivial center, we have that $\G \cong \Ad\, \G$ can be
        identified as a subgroup of $\GL(\g)$, where $\g$ denotes the Lie
        algebra (defined over $k$) of $\G$. Using Weil's restriction of scalars,
        we can then identify $\G(k)$ with the rational points of an algebraic
        $\Q$-subgroup $H \subset \GL(\g_0)$, where $\g_0$ is a $\Q$-form of
        $\g$. Under this identification, any arithmetic subgroup $\Gamma \subset
        \G(k)$ corresponds to an arithmetic subgroup (defined over $\Q)$ of
        $H(\Q)$. In particular (see \cite[Ch.~1 Prop.\ 7.2]{OniVinb-I}), $\Gamma$ stabilizes a $\Z$-lattice $L \subset
        \g_0$, and choosing a basis of $L$ we can embed $\Gamma \subset
        \GL_m(\Z)$. For a fixed field $k$, this integer $m$ is independent of
        the arithmetic subgroup $\Gamma \subset \G(k)$.

       From Minkowski's lemma we thus have an upper bound $A > 0$
       such that any arithmetic lattice $\Gamma \subset \G(k)$ contains a
       torsion-free subgroup $\Gamma_0 \subset \Gamma$ of index $[\Gamma:
       \Gamma_0] \le A$. Moreover, by doubling $A$ 
       we ensure that $\Gamma_0 \subset \G(k)^+$. By Theorem \ref{thm:fund-class}
       and Proposition \ref{prop:class-determ-vol}
       there exists $c' > 0$ such that for all these subgroups $\Gamma_0$ we have 
       $\vol(\Gamma_0\bs\Hy^n) \in  c'\Z$. The result follows by choosing for $c$
       the number $c'$ divided by the lowest multiple common to all integers
       $\le A$.
\end{proof}

In dimension $n=3$ this result was obtained by Borel in \cite{Bor81},
as a consequence of his volume formula; it answered positively a question
of Thurston \cite[6.7.6]{Thur80} (the case of nonarithmetic lattices
being solved by the commensurator theorem of Margulis). 
The work of Borel and Prasad  
\cite{BorPra89}, which relies on the volume formula \cite{Pra89}, provides
a lot of information about the volume distribution
for arithmetic lattices in very generic situations --
including the case of $\PO(n,1)$.
However it does not seem that Corollary \ref{cor:integ-mult} could be easily obtained
from their results.

\begin{rmk}
        Our result also shows that all covolumes of the quasi-arithmetic
        \emph{torsion-free} lattices $\Gamma \subset \G(k)$ are integral multiples of a
        single number. However it is not clear if this holds true for lattices containing
        torsion.
\end{rmk}

\subsection*{Acknowledgement} I would like to thank Thilo Kuessner for his help concerning
Section 3, and Pavel Tumarkin for pointing out that the work of his student Mike Roberts
contains new quasi-arithmetic reflection groups. I thank Steve Tschantz for the numerical
computations that permit the volume comparisons in Section \ref{sec:example}; this part of
the work has been realized in the framework of the AIM Square program ``Hyperbolic
geometry beyond dimension three''. We thank the American Institute of Mathematics for
their support. I also thank Inkang Kim and  Gopal Prasad for pointing out some mistakes in
earlier versions, and  Scott Thomson, Jean Raimbault, and the referee for helpful comments.

\section{Volume computations for the nonuniform case}
\label{sec:example}

\subsection{}
\label{sec:example-disc}
Let $\G$ be an admissible $k$-group for $G = \PO(n,1)$, and suppose that 
$\Gamma \subset \G(k)$ is a nonuniform lattice. Then $\Gamma$ must contain some nontrivial
unipotent elements, which means that $\G$ is isotropic. For $n > 3$ this is only possible if $k = \Q$,
and $\G$ corresponds to the adjoint group of $\SO(f)$ for $f$ a quadratic form over $\Q$ (cf.\
\cite[Lemma 2.2]{LiMil93}).

From now on suppose that $n$ is odd, with $n = 2m-1$. Let us define
\begin{align} \delta  = (-1)^m \disc(f), \label{eq:delta} \end{align} where
$\disc(f) \in \Q^\times/(\Q^\times)^2$ is the discriminant of $f$, and consider
the field $\ell = \Q(\sqrt{\delta})$. Let $D_\ell$ be its
discriminant, and $\zeta_\ell$ its Dedekind zeta function.

\subsection{} 
\label{sec:example-vol}

For a certain natural normalization of the Haar measure on a semisimple Lie
group $G$, the 
covolume of any arithmetic subgroup of $G$ can be obtained up to a rational
by using Prasad's volume formula \cite{Pra89} (the formula also permits precise
computations in many cases). Its application in the case $G = \PO(n,1)$ ($n$
odd) is worked out for instance in \cite[Sect.\ 2.6--7]{BelEme}.
The difference between Prasad's normalization of
the measure and the hyperbolic volume on $\Hy^n$ is explained in 
\cite[Sect.\ 2.1]{BelEme}. Together with Theorem \ref{thm:vol-prop} 
one can deduce the value $\vol(\Gamma\bs\Hy^n)$
up to a rational for any quasi-arithmetic lattice $\Gamma \subset G$. 
We give in the following proposition the values for the case  $k =\Q$.

\begin{prop}
        \label{prop:volumes-nonunif}
        Let $\Gamma \subset \G(\Q)$ be a nonuniform quasi-arithmetic lattice
        of $\PO(n,1)$ with $n \ge 5$ odd,
        and let $\ell/\Q$ be the field extension defined above in Section
        \ref{sec:example-disc}. 
        \begin{enumerate}
                \item If $\ell = \Q$, then  the covolume of $\Gamma$ is a rational
                        multiple of the Riemann zeta function
                        evaluated at $m = \frac{n+1}{2}$: 
                        \begin{align*}
                                \vol(\Gamma\bs \Hy^n)\; &\in\; \zeta(m) \cdot \Q^\times;
                        \end{align*}

                \item otherwise we have 
                        \begin{align*} 
                                \vol(\Gamma\bs \Hy^n) \; &\in \;
                                |D_\ell|^{n/2} \cdot
                                \frac{\zeta_\ell(m)}{\zeta(m)} \cdot\Q^\times.
                        \end{align*} 
\end{enumerate} 
\end{prop}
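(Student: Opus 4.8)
The plan is to reduce the statement to a covolume computation for a single arithmetic subgroup and then read off Prasad's volume formula. By Theorem \ref{thm:vol-prop} the covolume of the quasi-arithmetic lattice $\Gamma \subset \G(\Q)$ is a rational multiple of the covolume of $\G(\Z) = \G(\O_\Q)$, so it suffices to determine $\vol(\G(\Z)\bs\Hy^n)$ modulo $\Q^\times$. As recalled in Section \ref{sec:example-disc}, the nonuniformity of $\Gamma$ forces $\G$ to be isotropic with $k = \Q$, and hence $\G$ is the adjoint group of $\SO(f)$ for a quadratic form $f$ of signature $(n,1)$ over $\Q$. Since $n+1 = 2m$ is even, $\SO(f)$ is a $\Q$-form of type $\mathrm{D}_m$, and it is an inner form of the quasi-split group that splits over the discriminant extension $\ell = \Q(\sqrt{\delta})$, with $\delta = (-1)^m \disc(f)$ as in \eqref{eq:delta}; the nontrivial element of $\Gal(\ell/\Q)$ acts on the Dynkin diagram by interchanging the two end nodes of the fork.

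Next I would record the invariants of $\mathrm{D}_m$ that enter the formula. Its exponents are $1, 3, \ldots, 2m-3$ together with $m-1$, so the degrees $d_i = m_i+1$ are $2, 4, \ldots, 2m-2$ and $m$. In Prasad's formula \cite{Pra89} these degrees, read through the Galois action above, prescribe exactly which special values occur: the degrees $2, 4, \ldots, 2m-2$ contribute the factors $\zeta(2), \zeta(4), \ldots, \zeta(2m-2)$, while the fork degree $m$ contributes the Artin $L$-value $L(m, \chi_\ell)$, where $\chi_\ell$ is the quadratic character cut out by $\ell/\Q$. The explicit evaluation of Prasad's formula for $\PO(n,1)$ with $n$ odd, together with the contribution of the discriminant $D_\ell$ and the passage from Prasad's canonical Haar measure to the hyperbolic volume, is carried out in \cite[Sect.\ 2.1, 2.6--2.7]{BelEme}, and I would simply invoke that computation.

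It then remains to separate the two cases. Each even value $\zeta(2i)$ is a rational multiple of $\pi^{2i}$, so $\prod_{i=1}^{m-1}\zeta(2i)$ lies in $\pi^{m(m-1)} \cdot \Q^\times$; by the normalization computation of \cite[Sect.\ 2.1]{BelEme} these powers of $\pi$ are matched by the factor relating Prasad's measure to the hyperbolic volume, so that the total power of $\pi$ is rational and disappears modulo $\Q^\times$. When $\ell = \Q$ the character $\chi_\ell$ is trivial and $D_\ell = 1$, whence $L(m, \chi_\ell) = \zeta(m)$ and $\vol(\G(\Z)\bs\Hy^n) \in \zeta(m) \cdot \Q^\times$, giving case (1). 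When $\ell \ne \Q$, the factorization $\zeta_\ell = \zeta \cdot L(\,\cdot\,, \chi_\ell)$ yields $L(m, \chi_\ell) = \zeta_\ell(m)/\zeta(m)$, and the discriminant term of Prasad's formula supplies the factor $|D_\ell|^{n/2}$, giving case (2).

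The main obstacle is the precise bookkeeping inside Prasad's formula: one must check that every power of $\pi$ and every rational scalar produced by the degrees $2, \ldots, 2m-2$, by the Tamagawa number, by the local $\lambda$-factors, and by the measure normalization conspires to leave exactly the asserted factor modulo $\Q^\times$, and in particular that the exponent of $|D_\ell|$ equals $n/2$ and not some other half-integer. Since $\SO(f)$ is only an inner form of the quasi-split ${}^{2}\mathrm{D}_m$, one must also verify that replacing the quasi-split form by $\SO(f)$ itself alters the covolume only by a rational and by finite-place discriminant factors that are harmless modulo $\Q^\times$; this is precisely the content delivered by the explicit treatment in \cite{BelEme}, so in the final write-up this step is reduced to a citation together with the reduction furnished by Theorem \ref{thm:vol-prop}.
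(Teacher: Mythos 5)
Your proposal is correct and follows essentially the same route as the paper: the paper's own justification of Proposition \ref{prop:volumes-nonunif} is precisely the reduction to the arithmetic case via Theorem \ref{thm:vol-prop}, followed by an appeal to Prasad's volume formula as worked out for $\PO(n,1)$ in \cite[Sect.\ 2.6--7]{BelEme} together with the measure normalization of \cite[Sect.\ 2.1]{BelEme}. The extra bookkeeping you spell out (the degrees of $\tD_m$, the factorization $\zeta_\ell = \zeta \cdot L(\,\cdot\,,\chi_\ell)$, the exponent $s/2 = n/2$ of $|D_\ell|$, and the cancellation of the powers of $\pi$) is exactly the content the paper delegates to that citation.
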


\begin{rmk}
       The quotient $\zeta_\ell/\zeta$ might alternatively be described as a
       Dirichlet $L$-function. 
\end{rmk}

\begin{rmk}
        Prasad's formula also provides similar volume formulas in the compact
        case, i.e., for $k \neq \Q$. In this case $\zeta$ is to be replaced by
        $\zeta_k$, and $\ell$ is a quadratic extension of $k$ (except for the
        special case of ``triality forms''). Both
        discriminants $D_\ell$ and $D_k$ then appear in the formula.
\end{rmk}

\subsection{}
\label{sec:example-dim-5}
We consider the $5$-dimensional hyperbolic polytope $P_5 \subset \Hy^5$
that corresponds to the Coxeter diagram 
given in \eqref{eq:polytope} (see \cite[Sect.~4]{Vinb67} for the notation).
Let us denote by
$\Delta_5 \subset \PO(5,1)$ the discrete subgroup generated by the 
reflections through the hyperplanes delimiting $P_5$.
\begin{align}
        \label{eq:polytope}
        \begin{split}
       \xymatrix{
               *{\bullet} \ar@{--}[r]^{\frac{\sqrt{26}}{4}} 
               \ar@{-}[d]_\infty
               & *{\bullet} \ar@{-}[r] 
               & *{\bullet} \ar@{-}[r] \ar@{-}[d] 
               & *{\bullet} \ar@{-}[d] \\
               *{\bullet} \ar@{--}[r]_{\frac{\sqrt{26}}{4}} 
               & *{\bullet} \ar@{-}[r]  
               & *{\bullet} \ar@{-}[r] 
               & *{\bullet} 
       }
        \end{split}
\end{align}
This polytope appears in the list obtained by Mike Roberts in \cite{Roberts},
which contains many new examples of hyperbolic Coxeter polytopes of finite volume. 
The finiteness of $\vol(P_5)$ implies that $\Delta_5$ is a lattice -- nonuniform
since $P_5$ is noncompact.
Using a geometric integration Steve Tschantz has computed the following
numerical approximation for the volume of the polytope $P_5$: 
\begin{align}
       \vol(P_5) &\approx 0.0241330687945822699990.
        \label{eq:vol-Steve}
\end{align}

\subsection{}
\label{sec:dim-5-qa}
The Gram matrix of $P_5$ can be immediately deduced from the diagram
\eqref{eq:polytope}. From this matrix one can obtain (with the procedure used in the proof
of \cite[Theorem~2]{Vinb67}) a basis $\left\{ u_i \right\}$ of the Minkowski space $\R^{5,1}$ 
for which $(u_i, u_j) \in \Q$ and such that each $u_i$ is orthogonal to a hyperplane
delimiting $P_5$. With this one easily checks that $\Delta_5$ embeds as a subgroup of $\Ort(f, \Q)$ for
$f$ some quadratic form of discriminant $\disc(f) = -13 (\Q^\times)^2$.
In particular $\Delta_5$ is quasi-arithmetic; indeed, {\em properly}
quasi-arithmetic by applying \cite[Theorem~2]{Vinb67}. 
From Proposition \ref{prop:volumes-nonunif} we conclude that
$\vol(\Delta_5\bs\Hy^5) = \vol(P_5)$ is a rational multiple of
$13^{5/2} \zeta_\ell(3)/\zeta(3)$, where $\ell = \Q(\sqrt{13})$.
Numerical comparison  with
\eqref{eq:vol-Steve} (we use Pari/GP to evaluate the zeta functions)
then suggests the equality
\begin{align}
        \label{eq:volume-match}
        \vol(\Delta_5\bs\Hy^5) &= \frac{1}{23040} \cdot 13^{5/2} \cdot
        \frac{\zeta_\ell(3)}{\zeta(3)}.
\end{align}

\begin{rmk}
        The numerical match between \eqref{eq:volume-match} and
        \eqref{eq:vol-Steve}, together with the simplicity of the rational factor
        (note that $23040 = 2^9\cdot 3^2\cdot 5$), leave little doubt for the
        correctness of \eqref{eq:volume-match}. However, at this point we do not
        see any way to give a rigorous proof of this equality. 
\end{rmk}

\subsection{}
Let $P_7 \subset \Hy^7$ be the hyperbolic Coxeter polytope with the diagram: 
\begin{align}
        \label{eq:polytope-7}
        \begin{split}
       \xymatrix{
               *{\bullet} \ar@{--}[r]^{\frac{\sqrt{22}}{4}} 
               \ar@{-}[d]_\infty
               & *{\bullet} \ar@{-}[r] 
               & *{\bullet} \ar@{-}[r] 
               & *{\bullet} \ar@{-}[d] \ar@{-}[r]
               & *{\bullet} 
               \\
               *{\bullet} \ar@{--}[r]_{\frac{\sqrt{22}}{4}} 
               & *{\bullet} \ar@{-}[r]  
               & *{\bullet} \ar@{-}[r] 
               & *{\bullet} \ar@{-}[r] 
               & *{\bullet} 
       }
        \end{split}
\end{align}
As well as $P_5$ this polytope was found in \cite{Roberts}. We proceed as in Section
\ref{sec:dim-5-qa}: the corresponding reflection group 
$\Delta_7 \subset \PO(7,1)$ is properly quasi-arithmetic, defined as a
subgroup of an algebraic $\Q$-group determined by a quadratic form with
discriminant $-11$. Steve Tschantz has computed the following approximation:
\begin{align}
        \label{eq:volume-match-7}
        \vol(P_7) &\approx 0.000181338 \\
        \nonumber    &\approx \frac{11^{7/2}}{2^{13}\cdot 3^4 \cdot 5 \cdot 7}  
\cdot \frac{\zeta_\ell(4)}{\zeta(4)},
\end{align}
with $\ell = \Q(\sqrt{-11})$.
Again, this agrees with Proposition \ref{prop:volumes-nonunif}.

\section{Fundamental class and volume}
\label{sec:fund-class}

The purpose of this section is to prove Proposition \ref{prop:class-determ-vol},
mostly by following the approach of Neumann and Yang \cite[Sect.~3--4]{NeuYan99} (see also Kuessner \cite{Kuess12-TP}). 
In the following text $M = \Gamma\bs\Hy^n$ 
denotes a finite-volume orientable hyperbolic manifold. As
in Section \ref{sec:intro}, we denote by $\Omega$ the geometric boundary of $\Hy^n$.


\subsection{}
A point $x \in \Omega$ is a \emph{cusp} of
$\Gamma$ if it is the fixed point  of a parabolic element of $\Gamma$.  Let
$\CC \subset \Omega$ be the set of cusps of $\Gamma$. When $M$ is compact
then $\CC$ is empty.  
Let $Z$ be the end compactification of $M$, i.e., $Z$ is obtained by adjoining a
point $c_i$ to each of the (finitely many) cusps of $M$.
We consider a triangulation of $Z$, and we suppose (as we may) that each $c_i$ is a
vertex of this triangulation. This triangulation lifts to the covering space $X = \Hy^n \cup \CC$.
We denote by $C_\bullet(Z)$ the chain complex defined by the triangulation of $Z$, and by
$C_\bullet(X)$ the chain complex of its lift.
Then  $C_\bullet(X)$ is a $\Z\Gamma$-complex,
and $C_\bullet(X)_\Gamma = C_\bullet(Z)$. In particular,
\begin{align}
        H_n(C_\bullet(X)_\Gamma) = H_n(Z) \cong \Z.
        \label{eq:H_n-Z}
\end{align}

\subsection{}

For a free $\Z$-module with basis $S$, we denote by $JS$ the kernel of the
augmentation map $\Z S \to \Z$, which by definition sends any $x \in S$ to $1$.
Let $X_0 \subset X$ denotes the set of vertices of the lifted triangulation.
Then $X_0$ is $\Z$-basis of $C_0(X)$, and we have that $J X_0$ is
a $\Gamma$-submodule of $C_0(X)$.

\begin{lemma}
        The $\Gamma$-module $JX_0$ is isomorphic to  $J\CC \oplus F$, where
         $F$ is some free $\Gamma$-module.
         \label{lemma:JX-direct-sum}
\end{lemma}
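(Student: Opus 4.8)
The plan is to understand the $\Gamma$-module structure of the vertex set $X_0$ of the lifted triangulation, and to split off the free part coming from vertices in the interior $\Hy^n$ from the part coming from the cusps $\CC$. Recall that $X = \Hy^n \cup \CC$, so the vertex set decomposes as a disjoint union $X_0 = (X_0 \cap \Hy^n) \sqcup \CC$ of $\Gamma$-invariant subsets: the cusp points $c_i$ lift to the $\Gamma$-orbit of cusps $\CC$, while all the remaining vertices lie in the interior $\Hy^n$. Correspondingly $\Z X_0 = \Z(X_0 \cap \Hy^n) \oplus \Z\CC$ as $\Gamma$-modules. First I would exploit that $\Gamma$ acts \emph{freely} on the interior vertices $X_0 \cap \Hy^n$: since $\Gamma$ is a torsion-free lattice it acts freely on $\Hy^n$, so every interior vertex has trivial stabilizer. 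Hence $\Z(X_0 \cap \Hy^n)$ is a free $\Gamma$-module (a permutation module on a free $\Gamma$-set).

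The key step is then to analyze the augmentation kernel $JX_0$ in terms of this decomposition. The point is that the augmentation map $\Z X_0 \to \Z$ restricts to the augmentation $\Z\CC \to \Z$ on the cusp summand, and I want to relate $\ker(\Z X_0 \to \Z)$ to $\ker(\Z\CC \to \Z) = J\CC$. The cleanest way is to choose a single interior vertex $w \in X_0 \cap \Hy^n$ as a ``basepoint'' with augmentation value $1$, and use it to trivialize the augmentation on the interior part. Concretely, I would write $\Z(X_0 \cap \Hy^n) = \Z\Gamma \cdot w \oplus F'$ where $\Z\Gamma\cdot w \cong \Z\Gamma$ is the free submodule generated by the orbit of $w$, and $F'$ is the complementary free module spanned by the other interior orbits. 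Then any element $x \in JX_0$ can be written, using the relation in $J\Gamma$, in a form that separates its $\CC$-component from a genuinely free remainder.

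More precisely, the argument I expect to carry out is the following splitting. Consider the $\Gamma$-module surjection $\pi: JX_0 \to J\CC$ induced by the projection $\Z X_0 \to \Z\CC$ (which sends interior vertices to $0$); one checks this lands in $J\CC$ because the total augmentation vanishes and the interior vertices are killed, so the augmentation of the image equals the negative of the augmentation of the interior part---care is needed here, and this is exactly where the free orbit through $w$ is used to produce a $\Gamma$-equivariant splitting. The strategy is to build a section $s: J\CC \to JX_0$ of this projection by sending $c_i - c_j \in J\CC$ (differences of cusps) to the corresponding difference in $\Z X_0$, and more generally sending $c - (\text{aug})\cdot w$-type elements appropriately, so that $JX_0 = s(J\CC) \oplus \ker(\pi)$. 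Once the splitting is established, the kernel $\ker(\pi) = JX_0 \cap \Z(X_0\cap\Hy^n) = J(X_0 \cap \Hy^n)$ is the augmentation kernel of the free permutation module $\Z(X_0\cap\Hy^n)$; since that module has a $\Gamma$-free basis containing the orbit of $w$ with augmentation $1$, its augmentation ideal is again a free $\Gamma$-module (one uses that $JG$ for the free module $\Z\Gamma \oplus F'$ with a unit-augmentation free generator is free, being isomorphic to the augmentation ideal relative to a chosen free basepoint). Setting $F = \ker(\pi)$ then gives the desired isomorphism $JX_0 \cong J\CC \oplus F$ with $F$ free.

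The main obstacle I anticipate is verifying freeness of $F = \ker(\pi)$ and exhibiting the equivariant section $s$ cleanly, rather than any deep difficulty: it is the bookkeeping of augmentation values across the two summands. The subtle point is that $J\CC$ itself is \emph{not} free (the cusp stabilizers are nontrivial parabolic subgroups, so $\Z\CC$ is not a free $\Gamma$-module), which is precisely why $J\CC$ must survive as a distinguished summand and cannot be absorbed into the free part $F$. Thus the crux is to confirm that after removing the $J\CC$ summand via the section $s$, what remains is built only from the free interior orbits, and to apply the elementary fact that the augmentation ideal of a free permutation module possessing a free orbit of augmentation $1$ is itself $\Gamma$-free. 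I would verify this last fact by the standard device: if $\Z S = \Z\Gamma \oplus F'$ with the first factor generated by a $\Gamma$-free orbit of a single augmentation-$1$ generator, then $\{x - (\mathrm{aug}\,x)\,w : x \in \text{free basis}\}$ gives a $\Gamma$-free basis of the augmentation kernel.
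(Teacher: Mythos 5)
Your argument breaks at its central step, and the failure is not bookkeeping but the actual content of the lemma. The coordinate projection $\Z X_0 \to \Z\CC$ that kills interior vertices does \emph{not} carry $JX_0$ into $J\CC$: for an interior vertex $x$ and a cusp $a$, the element $x-a$ lies in $JX_0$ but maps to $-a$, which has augmentation $-1$; in fact the image of $JX_0$ under this projection is all of $\Z\CC$. Consequently the splitting you aim for, $JX_0 = J\CC \oplus J(X_0\cap\Hy^n)$, is false: with $s$ the natural inclusion of $J\CC$ (as you describe it, sending $c_i-c_j$ to itself), the submodule $s(J\CC)\oplus\ker(\pi) = J\CC\oplus J(X_0\cap\Hy^n)$ consists exactly of those chains whose cusp part and interior part \emph{separately} have augmentation zero; it is a proper $\Gamma$-submodule of $JX_0$, with quotient $\Z$ detected by the augmentation of the interior part, and it does not contain $x-a$. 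Your second claimed ingredient is also false: the augmentation ideal $J(X_0\cap\Hy^n)$ of a free permutation module is \emph{not} a free $\Gamma$-module, and your proposed basis does not span. Writing $w=x_1,x_2,\dots,x_r$ for orbit representatives of $X_0\cap\Hy^n$, your set is $\{x_j-w : j\ge 2\}$, and its $\Z\Gamma$-span does not contain $\gamma w - w$ for $\gamma\neq 1$ (compare coefficients of the $\Z$-basis vectors $\gamma x_j$, $j\ge 2$); indeed the quotient of $J(X_0\cap\Hy^n)$ by this span is the augmentation ideal $I\Gamma\subset\Z\Gamma$. More structurally, Schanuel's lemma applied to $0\to J(X_0\cap\Hy^n)\to\Z(X_0\cap\Hy^n)\to\Z\to 0$ and $0\to I\Gamma\to\Z\Gamma\to\Z\to 0$ shows $J(X_0\cap\Hy^n)$ is projective if and only if $I\Gamma$ is, which by Stallings--Swan happens if and only if $\Gamma$ is a free group; a torsion-free nonuniform lattice contains $\Z^{n-1}$, hence has cohomological dimension at least $2$ and is never free. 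So no repair of your basis can exist.

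The paper's proof runs in the opposite direction, and the asymmetry is the whole point. Since $\Gamma$ acts freely on the interior vertices and $\CC\neq\emptyset$, one can choose for each orbit representative $x_j$ a cusp $a_j$ and define a $\Gamma$-equivariant, augmentation-preserving retraction $\rho:\Z X_0\to\Z\CC$ by $\rho(\gamma x_j)=\gamma a_j$ and $\rho|_{\Z\CC}=\mathrm{id}$. Then $\rho$ retracts $JX_0$ onto $J\CC$, giving $JX_0 = J\CC\oplus\ker\rho$, and $\ker\rho$ is the $\Z\Gamma$-span of the \emph{mixed} elements $x_j-a_j$ (the paper's ``elements of the form $x-a$''), which is isomorphic to the full permutation module $\Z(X_0\cap\Hy^n)$ --- not to its augmentation ideal --- and hence free. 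Equivalently, the interior-coordinate projection gives an exact sequence $0\to J\CC\to JX_0\to\Z(X_0\cap\Hy^n)\to 0$ (surjectivity uses a cusp: $\xi$ lifts to $\xi-\varepsilon(\xi)a$), which splits because the quotient is free. Note that a retraction in your direction, onto the interior part, cannot exist equivariantly: cusp stabilizers are infinite parabolic subgroups while interior vertices have trivial stabilizers. So the cusps are not merely a summand that ``must survive''; they are needed to anchor the interior vertices, and this is precisely the mechanism your proposal bypasses.
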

\begin{proof}
        The group $\Gamma$ acts freely on $X_0 \cap \Hy^n$. In particular, if
        $\CC$ is empty then $JX_0$ is a free $\Z\Gamma$-module. When $\CC$ is
        not empty it suffices to take as $F$ the submodule generated by elements
        of the form $x - a$, with $x \in X_0 \cap \Hy^n$ and $a \in \CC$.
\end{proof}

By definition, the relative homology $H_n(\Gamma, \CC)$
is $H_{n-1}(\Gamma, J\CC)$. We then have the following.

\begin{prop}
       $H_n(\Gamma, \CC) = H_n(C_\bullet(X)_\Gamma)$. 
       \label{prop:CX-computes-rel-hom} 
\end{prop}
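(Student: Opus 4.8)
The plan is to identify the homology of the quotient complex $C_\bullet(X)_\Gamma$ with the relative homology $H_n(\Gamma, \CC) = H_{n-1}(\Gamma, J\CC)$ by exploiting the direct-sum decomposition of Lemma \ref{lemma:JX-direct-sum}. The key observation is that $C_\bullet(X)$ is a complex of $\Z\Gamma$-modules whose homology computes the homology of $X$, and since $X = \Hy^n \cup \CC$ is obtained from contractible $\Hy^n$ by adjoining the cusp points, the complex $C_\bullet(X)$ should serve as a resolution-like object relating the homology of $\Gamma$ to that of the cusps.

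\subsection{}

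First I would analyze the homology of $C_\bullet(X)$ itself as a complex of abelian groups. The space $X = \Hy^n \cup \CC$ deformation retracts in a way that makes it homotopy equivalent to a wedge-like configuration: $\Hy^n$ is contractible, and adjoining the discrete set $\CC$ of cusp points creates extra $0$-dimensional homology. Concretely, $\widetilde{H}_0(X) = 0$ if one takes reduced homology relative to a single basepoint, but the augmentation structure is the relevant feature. The plan is to use the short exact sequence of $\Z\Gamma$-modules
\begin{align}
        0 \to J X_0 \to C_0(X) \to \Z \to 0
        \label{eq:aug-ses}
\end{align}
coming from the augmentation $C_0(X) \to \Z$, and to observe that the higher chain groups $C_i(X)$ for $i \ge 1$ together with this augmentation assemble into an acyclic complex away from degree $0$, so that $C_\bullet(X)$ with $\Z$ placed in degree $-1$ (via augmentation) is exact. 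This exactness would let me replace the augmented complex by $J X_0$ sitting in degree $0$ up to a quasi-isomorphism with a shift.

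\subsection{}

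The main technical step is then to take $\Gamma$-coinvariants (equivalently, $\otimes_{\Z\Gamma} \Z$) and to compare the resulting homology groups. By Lemma \ref{lemma:JX-direct-sum} we have $J X_0 \cong J\CC \oplus F$ with $F$ free over $\Z\Gamma$, and since $F$ is free its higher $\Gamma$-homology vanishes while its coinvariants contribute only in the bottom degree. The plan is to write down the homology long exact sequence (or a hyperhomology spectral sequence) associated to the acyclic augmented complex $C_\bullet(X)$ over $\Z\Gamma$ and take its derived coinvariants. Because $C_\bullet(X)$ is acyclic as a complex of abelian groups except for the contribution recorded by \eqref{eq:aug-ses}, the hypercohomology of $C_\bullet(X)_\Gamma$ in degree $n$ matches the group homology $H_n(\Gamma, J X_0)$ shifted appropriately; more precisely, the acyclicity gives
\begin{align}
        H_n\bigl(C_\bullet(X)_\Gamma\bigr) \cong H_{n-1}(\Gamma, J X_0).
        \label{eq:shift-iso}
\end{align}
Here the degree shift by one arises because $J X_0$ represents the truncation of the augmented resolution one step up. Then the direct-sum decomposition yields $H_{n-1}(\Gamma, J X_0) \cong H_{n-1}(\Gamma, J\CC) \oplus H_{n-1}(\Gamma, F)$, and since $F$ is free and $n - 1 \ge 1$ for the relevant range, the free summand contributes nothing in positive degree. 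This leaves $H_{n-1}(\Gamma, J\CC)$, which is by definition $H_n(\Gamma, \CC)$, completing the identification.

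\subsection{}

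The hard part will be justifying the degree shift \eqref{eq:shift-iso} cleanly, that is, making precise the sense in which $C_\bullet(X)$ serves as a (partial) free resolution and tracking exactly where the acyclicity of $X$ enters. One must be careful that $C_\bullet(X)$ is \emph{not} a free $\Z\Gamma$-resolution in all degrees --- the bottom degree $C_0(X)$ fails to be free precisely because of the cusps, which is why $J\CC$ survives --- so the argument cannot simply invoke a comparison of resolutions but must instead exploit the exact sequence \eqref{eq:aug-ses} together with the freeness of the $C_i(X)$ for $i \ge 1$ (the simplices away from cusp vertices carry a free $\Gamma$-action). I would handle this by splicing: view the acyclic augmented complex as a resolution of $\Z$ whose degree-$0$ term has been replaced, compute the derived coinvariants via a first-quadrant spectral sequence whose $E^2$-page isolates $H_{n-1}(\Gamma, J\CC)$ in the relevant spot, and verify that all other contributions either vanish (freeness of $F$ and of the higher chain groups) or cancel. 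The bookkeeping of this spectral sequence, and confirming that no extension problems obstruct the final isomorphism, is where the genuine care is needed.
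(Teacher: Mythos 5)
Your proposal is correct and follows essentially the same route as the paper: the paper's proof likewise notes that contractibility of $X$ and freeness of the $\Gamma$-action on $\Hy^n$ make $C_{\bullet\ge 1}(X)$ a free $\Z\Gamma$-resolution of $JX_0$, giving $H_n(C_\bullet(X)_\Gamma) = H_{n-1}(\Gamma, JX_0)$, which Lemma \ref{lemma:JX-direct-sum} identifies with $H_{n-1}(\Gamma, J\CC) = H_n(\Gamma,\CC)$. The spectral-sequence bookkeeping you flag as the hard part is not needed: once $C_{\bullet\ge 1}(X)$ is recognized as a free resolution of $JX_0$, the degree shift is just the definition of group homology computed from that resolution.
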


\begin{proof}
        Since $X$ is contractible and $\Gamma$ acts freely on $\Hy^n$, 
        we have that $C_{\bullet\ge 1}(X)$ is a free $\Z\Gamma$-resolution of $JX_0$. 
        Thus $H_{n-1}(\Gamma, JX_0) = H_n(C_\bullet(X)_\Gamma)$, and the former
        equals $H_{n-1}(\Gamma, J\CC)$ by Lemma \ref{lemma:JX-direct-sum}.
\end{proof}

The proposition, together with \eqref{eq:H_n-Z}, justifies the following.

\begin{definition}
        \label{def:fund-class}
        For $\Gamma$ as above, we define its \emph{fundamental class} $[\Gamma]$ 
        to be the generator of $H_n(\Gamma, \CC)$ that can by represented by 
        a sum of positively oriented simplices from $C_n(X)$.
\end{definition}

\begin{rmk}
        \label{rmk:CC-Omega-orient}
       The natural map $H_n(\Gamma, \CC) \to H_n(\Gamma, \Omega)$ is an
       isomorphism (see \cite[Lemma 2.2.5]{Kuess12-TP}).
       This justifies an option to take as an equivalent definition for $[\Gamma]$
       the (positively oriented) generator of $H_n(\Gamma, \Omega)$, as we did in Section
       \ref{sec:intro-relative-homol}. 
\end{rmk}

\subsection{}
\label{sec:tau}
We consider the free $\Z$-module $S_j(\Omega)$ that is 
generated by the $(j+1)$-tuples $(x_0, \dots, x_j)$ of distinct elements 
in $\Omega$ modulo the relations
\begin{align*}
        (x_0, \dots, x_j) = \mathrm{sgn}(\sigma) (x_{\sigma(0)}, \dots,
        x_{\sigma(j)}),
\end{align*}
for any permutation $\sigma$. Geometrically,
these generators correspond to the ideal geodesic simplices in $\Hy^n$
(with orientation).
With the standard boundary map, the chain complex $S_\bullet(\Omega)$ gives 
a resolution of $\Z$. Moreover, the isometry group $G$ acts on this complex, so that 
$S_{\bullet\ge 1}(\Omega)$ is a (non-free) $\Z G$-resolution of $J\Omega$. 

Let $\tau$ denote the inclusion $J\CC \to J \Omega$. Since 
$S_{\bullet\ge 1}(\Omega)$ is acyclic, for any free 
$\Z \Gamma$-resolution $D_\bullet \to J \CC$ the map $\tau$ extends uniquely up
to homotopy to a $\Z\Gamma$-chain complex map (see \cite[Lemma I.74]{Brown82}): 


\begin{align}
        \begin{split}
\xymatrix{
        D_{\bullet} \ar@{->}[d] \ar[r] & J\CC \ar[d]^\tau & \\
        S_{\bullet \ge 1}(\Omega) \ar[r]  & J\Omega 
}
        \end{split}
        \label{eq:tau}
\end{align}
In particular, this induces a canonical map $\tau_*: H_n(\Gamma, \CC) \to
H_n(S_\bullet(\Omega)_\Gamma)$. For $D_\bullet = C_{\bullet \ge 1}(X)$, 
the chain complex map may be explicitly given as follows. 
Take a set of $\Gamma$-representatives of points of $X_0 \cap \Hy^n$ and send
them to arbitrarily chosen distinct points in $\Omega \setminus \CC$.
Obviously such a choice determines uniquely a
$\Z\Gamma$-map $C_j(X) \to S_j(\Omega)$ for any $j \ge 1$. 


\subsection{}
Let $\nu: S_n(\Omega) \to \R$ be the linear map that assigns to any 
$n$-simplex its signed hyperbolic volume. Then $\nu$ is zero on boundary
elements: if $b = \partial c$ for some $c \in S_{n+1}(\Omega)$ then $\nu(b) =
0$.  Moreover, $\nu$ is obviously $G$-invariant, so that
for any subgroup $A \subset G$ we obtain an induced map
$\nu_*: H_n(S_\bullet(\Omega)_A) \to \R$. For the case $A = \Gamma$, we can
state the following (see \cite[end of the proof of Lemma 4.2]{NeuYan99}).

\begin{prop}
        \label{prop:v-alpha}
        We have $\nu_*(\tau_*([\Gamma])) = \vol(M)$. 
\end{prop}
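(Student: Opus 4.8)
$\textbf{Proof proposal for Proposition \ref{prop:v-alpha}.}$

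The plan is to unwind the two canonical maps $\tau_*$ and $\nu_*$ at the chain level, using the explicit realization of $\tau$ via the resolution $D_\bullet = C_{\bullet\ge 1}(X)$ described in Section \ref{sec:tau}, and then to recognize the resulting real number as the sum of signed volumes of the geodesic simplices making up a fundamental domain for $M$. First I would fix the explicit $\Z\Gamma$-chain map $C_\bullet(X)\to S_\bullet(\Omega)$: choose $\Gamma$-representatives of the vertices in $X_0\cap\Hy^n$ and send them to distinct points of $\Omega\setminus\CC$, while the cusp vertices $\CC$ map into $\Omega$ by the inclusion $\tau$. This sends a positively oriented $n$-simplex of the triangulation of $X$, with vertices in $\Hy^n\cup\CC$, to the ideal geodesic simplex in $S_n(\Omega)$ spanned by the images of its vertices. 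The key geometric input is that pushing each (finite or cusped) geodesic simplex out to its ideal vertices in $\Omega$ does not change its hyperbolic volume in a way that survives after summing, because the ideal straightening is homotopic to the original chain and $\nu$ kills boundaries.

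Next I would compute $\nu_*(\tau_*([\Gamma]))$ concretely. By Definition \ref{def:fund-class} the class $[\Gamma]\in H_n(\Gamma,\CC)$ is represented by a sum $\sum_i \sigma_i$ of positively oriented $n$-simplices from $C_n(X)$ whose image in $C_\bullet(X)_\Gamma=C_\bullet(Z)$ is the fundamental cycle of $Z\cong M$ (using \eqref{eq:H_n-Z} and Proposition \ref{prop:CX-computes-rel-hom}). Under the chain map this representative is sent to $\sum_i \widehat{\sigma}_i\in S_n(\Omega)_\Gamma$, where $\widehat{\sigma}_i$ is the ideal geodesic simplex spanned by the images of the vertices of $\sigma_i$. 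Applying $\nu$ gives $\sum_i \nu(\widehat{\sigma}_i)$, the total signed ideal-straightened volume of a fundamental domain. Since the triangulation of $Z$ descends to a genuine triangulation of the manifold $M$ away from the cusps, and the positively oriented simplices tile $M$ with the correct orientation, I would argue that $\sum_i \nu(\widehat{\sigma}_i)=\vol(M)$.

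The main obstacle is the last equality: justifying that replacing each simplex $\sigma_i$ (with finite or cusped geodesic realization in $\Hy^n$) by its straightened ideal version $\widehat{\sigma}_i$ preserves the total volume. The cleanest route is the straightening homotopy: the geodesic coning that moves the true vertices of $\sigma_i$ radially out to the chosen ideal points in $\Omega$ defines a chain homotopy within $S_\bullet(\Omega)$ between the (straightened) true fundamental cycle and $\sum_i\widehat\sigma_i$. Because $\nu$ vanishes on boundaries and is $G$-invariant, the value $\nu_*$ depends only on the homology class, so I may compute it on the straightened representative while knowing it equals the sum of actual volumes of the true geodesic simplices, which by additivity of hyperbolic volume over a fundamental domain is exactly $\vol(M)$. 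I would be careful near the cusps, where the simplices are noncompact: there one uses that the finite-volume condition forces the straightening to have summable, convergent volumes, and that the truncated pieces near each $c_i$ contribute boundary terms annihilated by $\nu$. Assembling these observations yields $\nu_*(\tau_*([\Gamma]))=\vol(M)$.
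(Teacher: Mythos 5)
The paper does not actually supply its own argument for Proposition \ref{prop:v-alpha}: it defers entirely to the end of the proof of Lemma 4.2 in \cite{NeuYan99}. Your proposal is a reconstruction of exactly that straightening argument, and its skeleton is the right one: represent $[\Gamma]$ by the fundamental cycle $\sum_i\sigma_i$ of the triangulation of $Z$, observe that $\tau_*$ sends it to the ideal cycle $\sum_i\widehat\sigma_i$, and then compare the signed volume of $\sum_i\widehat\sigma_i$ with that of the geodesically straightened cycle keeping the original vertices, using that $\nu$ vanishes on boundaries.

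There is, however, a genuine gap in the step where you carry out this comparison. You assert that geodesic coning ``defines a chain homotopy within $S_\bullet(\Omega)$ between the (straightened) true fundamental cycle and $\sum_i\widehat\sigma_i$.'' The straightened true cycle is not an element of $S_\bullet(\Omega)$ at all: its vertices lie in $\Hy^n\cup\CC$, while $S_\bullet(\Omega)$ only admits vertices on the boundary sphere, so neither the homotopy nor the evaluation of $\nu$ on that cycle makes sense where you place them. The repair is standard but must be made explicit: work in the larger complex of geodesic simplices with distinct vertices in $\Hy^n\cup\Omega$, and check that the signed-volume map extends there and still kills boundaries (the alternating sum of the volumes of the $n+2$ facets spanned by $n+2$ points of $\Hy^n\cup\Omega$ vanishes). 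Even in that complex, ``chain homotopy'' is not quite the right notion, since the straightening map and the map realizing $\tau$ extend \emph{different} maps in degree zero ($v\mapsto v$ versus $v\mapsto p_v$); what one actually proves is that the difference of the two specific cycles is a boundary, by moving one $\Gamma$-orbit of interior vertices at a time: for a vertex $v$ with target $p$, the $(n+1)$-simplices $(p,v,w_1,\dots,w_n)$ over the $n$-simplices containing $v$ exhibit the difference as a boundary plus terms $(p,v,F)$ indexed by the $(n-1)$-faces $F$ adjacent to $v$, and these cancel in pairs precisely because the fundamental cycle of $Z$ is closed (each such face lies in exactly two $n$-simplices, with opposite induced orientations). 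This cancellation, which uses the cycle condition, is the actual content behind your ``coning homotopy''; without it the argument does not close. A smaller inaccuracy of the same kind: in your step (e1), the straightened simplices need not literally ``tile'' $M$ (straightening can create overlaps and orientation reversals); the correct statement is that the straightened cycle still represents the locally finite fundamental class of $M$, hence covers almost every point with total signed multiplicity one, which is what gives $\vol(M)$.
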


\subsection{}
Consider $J \Omega$ as a $\Z G$-module, 
and let $\iota$ denote the identity $J\Omega \to J\Omega$.
Similarly as for $\tau$, it induces a canonical
map $\iota_*: H_n(G, \Omega) \to H_n(S_\bullet(\Omega)_G)$. Since $\tau$ agrees
with $\iota$ on its domain of definition, we have that the left square in the
following diagram is commutative. That the right square is also commutative is
obvious. Recall that $j : \Gamma \to G$ denotes the inclusion.

\begin{align}
        \begin{split}
\xymatrix{
        H_n(\Gamma, \CC) \ar[r]^-{\tau_*} \ar[d]_{j_*} & H_n(S_\bullet(\Omega)_\Gamma)
        \ar[d]\ar[r]^-{\nu_*} & \R \ar[d]^{id} & \\
        H_n(G, \Omega) \ar[r]^-{\iota_*} & H_n(S_\bullet(\Omega)_G)
        \ar[r]^-{\nu_*} & \R.
}
        \end{split}
        \label{eq:exact-sq-vol}
\end{align}

\begin{proof}[Proof of Proposition \ref{prop:class-determ-vol}]
        Set $v = \nu_* \circ \iota_*$.
        Then the result follows by combining Proposition \ref{prop:v-alpha} with the
        fact that the diagram \ref{eq:exact-sq-vol} is commutative.
\end{proof}

\section{Homology of Algebraic groups}
\label{sec:homol-alg-gps}

\subsection{}

We need to recall the following theorem, which appears in  
\cite[Prop.~XIII.3.9]{BorWal} for the anisotropic case, 
and in \cite[Theorem 2.1]{BorYang94} for $\sG$ isotropic.
Its proof combines the work of Borel, Garland, Yang, and uses the work of
Blasius-Franke-Grunewald \cite{BlFrGru94} in the isotropic case.
Here $\Hct^\bullet$ denotes the continuous cohomology. 
\begin{theorem}
  Let $\sG$ be a simply connected absolutely simple $k$-group.
  The natural map
  \begin{align}
    \label{eq:Borel-Yang}
    \Hct^\bullet(\sG(k\otimes_\Q \R), \R) \to H^\bullet(\sG(k), \R)  
  \end{align}
  is an isomorphism.
  \label{thm:Borel-Yang}
\end{theorem}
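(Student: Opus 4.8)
The plan is to recognize this as the theorem of Borel--Yang (in the isotropic case) and of Borel--Wallach (in the anisotropic case), and to reduce it to a stable comparison theorem for $S$-arithmetic groups followed by a passage to the limit. The natural map factors as the composite
\[ \Hct^\bullet(\sG(k\otimes_\Q\R), \R) \to H^\bullet(\sG(k\otimes_\Q\R), \R) \to H^\bullet(\sG(k), \R), \]
where the left group is regarded as a discrete group in the middle term, the first arrow is the canonical comparison from continuous to abstract group cohomology of the archimedean Lie group, and the second is restriction along the inclusion $\sG(k) \hookrightarrow \sG(k\otimes_\Q\R)$ of $k$-points into the archimedean points.

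First I would express the target through $S$-arithmetic groups. Writing $\O_S$ for the ring of $S$-integers, with $S$ ranging over the finite sets of places containing the archimedean ones, one has $\sG(k) = \varinjlim_S \sG(\O_S)$. Since homology commutes with filtered colimits of groups and we work over a field, $H_\bullet(\sG(k),\R)=\varinjlim_S H_\bullet(\sG(\O_S),\R)$, and dualizing (using that $\Hom(-,\R)$ sends a colimit to the corresponding limit) yields
\[ H^\bullet(\sG(k), \R) = \varprojlim_S H^\bullet(\sG(\O_S), \R). \]
Hence it suffices to understand the cohomology of each $\sG(\O_S)$ compatibly in $S$.

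The core step is the comparison, for each fixed $S$, of $H^\bullet(\sG(\O_S),\R)$ with the continuous cohomology of the product of local groups $G_S = \sG(k\otimes_\Q\R) \times \prod_{v\in S_{\mathrm f}} \sG(k_v)$. In the anisotropic case this is Borel's stability theorem together with Matsushima-type identifications, which is the content of \cite[Prop.~XIII.3.9]{BorWal}. In the isotropic case the non-compactness of the associated locally symmetric space forces one to control the Eisenstein/boundary contribution, and here the essential input is the theorem of Blasius--Franke--Grunewald \cite{BlFrGru94}: in a stable range, growing with $S$, the comparison map $\Hct^\bullet(G_S,\R) \to H^\bullet(\sG(\O_S),\R)$ is an isomorphism. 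I expect this isotropic case to be the main obstacle, since it is precisely where the delicate analysis of the cohomology at infinity enters, and where the rank-one hypothesis of our setting is irrelevant but the general machinery is deepest.

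To extract the archimedean factor from $G_S$, I would then invoke the vanishing of continuous cohomology of $p$-adic reductive groups with real coefficients: for each finite place $v$ the group $\sG(k_v)$ is totally disconnected, so $\Hct^0(\sG(k_v),\R)=\R$ and $\Hct^{>0}(\sG(k_v),\R) = 0$. By the Künneth formula in continuous cohomology this gives an isomorphism $\Hct^\bullet(G_S, \R) \cong \Hct^\bullet(\sG(k\otimes_\Q\R), \R)$ independent of $S$. Combining this with the comparison of the previous paragraph, for each fixed degree $q$ and all sufficiently large $S$ the group $H^q(\sG(\O_S),\R)$ is identified with $\Hct^q(\sG(k\otimes_\Q\R),\R)$, with the transition maps acting as the identity on this stable value. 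Passing to the limit over $S$ then identifies $H^q(\sG(k),\R)$ with $\Hct^q(\sG(k\otimes_\Q\R),\R)$, and a diagram chase against the factorization above confirms that the identification is the one induced by the natural map, completing the proof.
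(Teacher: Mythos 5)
You should first be aware that the paper does not prove Theorem \ref{thm:Borel-Yang} at all: it is explicitly \emph{recalled} from the literature, namely \cite[Prop.~XIII.3.9]{BorWal} for the anisotropic case and \cite[Theorem~2.1]{BorYang94} (which in turn relies on \cite{BlFrGru94}) for the isotropic case. So your proposal is, correctly, an account of how the literature proves this, and its architecture matches that of the cited proofs: the reduction $H^\bullet(\sG(k),\R)=\varprojlim_S H^\bullet(\sG(\O_S),\R)$ is legitimate (homology commutes with directed colimits, and since $\R$ is an injective $\Z$-module there is no $\varprojlim^1$ correction), and stripping off the finite places via K\"unneth together with the vanishing $\Hct^{>0}(\sG(k_v),\R)=0$ is indeed how one identifies $\Hct^\bullet(G_S,\R)$ with $\Hct^\bullet(\sG(k\otimes_\Q\R),\R)$. (A minor point: that vanishing does not follow from total disconnectedness alone --- the groups $\sG(k_v)$ are noncompact, so one cannot average; it is a theorem of Casselman--Wigner, also treated in \cite{BorWal}.)

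The genuine gap is in your pivotal step. You attribute to \cite{BlFrGru94} the statement that $\Hct^q(G_S,\R)\to H^q(\sG(\O_S),\R)$ is an isomorphism ``in a stable range growing with $S$'', and you conclude that for fixed $q$ and all large $S$ one has $H^q(\sG(\O_S),\R)\cong\Hct^q(\sG(k\otimes_\Q\R),\R)$ with identity transition maps. Neither claim is correct. What \cite{BlFrGru94} prove is \emph{injectivity} of this comparison map, in every degree; stable ranges in Borel's sense depend on the rank of $\sG$ and not on $S$ (and for the admissible groups of this paper, which have $\Q$-rank one and are needed in degree $n$, any such range is far too short --- this is exactly why the paper must invoke \cite{BorYang94} rather than Borel's stability theorem). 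More importantly, the cohomology of $\sG(\O_S)$ does \emph{not} stabilize to the continuous cohomology at any finite stage: for $\sG=\SL_2/\Q$ and $S=\{\infty,p\}$, Serre's amalgam $\SL_2(\Z[1/p])=\SL_2(\Z)*_{\Gamma_0(p)}\SL_2(\Z)$ and Mayer--Vietoris give $H^2(\SL_2(\Z[1/p]),\R)\cong H^1(\Gamma_0(p),\R)$, which already has dimension $3$ for $p=11$ and grows without bound with $p$, whereas $\Hct^2(\SL_2(\R)\times\SL_2(\Q_p),\R)=\R$. Such automorphic classes persist, and indeed proliferate, as $S$ grows; they disappear only in the inverse limit, and showing that no nonzero compatible family of them survives --- i.e.\ surjectivity in the limit --- is precisely the nontrivial content of \cite[Theorem~2.1]{BorYang94}. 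Your sketch therefore assumes the hard part of the theorem rather than proving it; as a citation-level summary it points to the right sources, but as a proof it has a hole exactly where the real work lies.
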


\subsection{}

Let $\widetilde{G} = \sG(k\otimes_\Q \R)$, for $\sG$ as above.
We have that $\widetilde{G}$ is connected. It is known that
\begin{align}
        \Hct^\bullet(\widetilde{G}, \R) &= H^\bullet(X_u, \R),
        \label{eq:H_ct-Xu}
\end{align}
where $X_u$ denotes the \emph{compact dual} symmetric space associated  
with $\widetilde{G}$; see Borel \cite[Sect.~10.2]{Bor74}.
We can now prove the following.

\begin{prop}
        Let $\G$ be an admissible $k$-group for $\PO(n,1)$. Then $H_n(\G(k))$
        has rank one. 
        \label{prop:Borel-rk-1}
\end{prop}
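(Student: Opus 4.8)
The plan is to deduce rank-one-ness for the adjoint group $\G$ from the corresponding cohomological statement for its simply connected cover $\sG$, using Theorem \ref{thm:Borel-Yang} together with the identification \eqref{eq:H_ct-Xu}. First I would recall that since $\G$ is absolutely simple adjoint with $\G(\R) \cong \PO(n,1)^\circ$-factors times a compact group, its simply connected cover $\sG$ is an admissible $k$-group of type $\mathrm{B}$ or $\mathrm{D}$, whose archimedean realization $\widetilde{G} = \sG(k \otimes_\Q \R) \cong \Spin(n,1) \times (\text{compact})$. The compact dual symmetric space $X_u$ attached to the noncompact factor $\Spin(n,1)$ (equivalently to $\SO(n,1)^\circ$) is the sphere $S^n$, while the compact factors of $\widetilde{G}$ contribute only their own cohomology, concentrated in a range that I must control.

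Then I would compute $\Hct^\bullet(\widetilde{G}, \R)$ via \eqref{eq:H_ct-Xu} as $H^\bullet(X_u, \R)$, where $X_u$ is the product of $S^n$ with the compact symmetric spaces $X_u^{(i)}$ dual to the compact simple factors $K_i$. For a \emph{compact} simple factor the dual is the group $K_i$ itself (a point's worth of noncompact part), so $X_u^{(i)} = K_i$ and we are looking at the real cohomology of a compact Lie group. The key numerical point is that $H^n(X_u, \R)$ should be one-dimensional: the $S^n$ factor contributes $\R$ in degrees $0$ and $n$, and by the Künneth formula I would need to check that no product of a positive-degree class on $S^n$ with a positive-degree class on the compact factors lands in total degree $n$. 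The top class of $S^n$ sits in degree $n$ and pairs only with the degree-$0$ class of the compact factors, while the degree-$0$ class of $S^n$ pairs with degree-$n$ classes of the compact factors; so I must rule out the latter contribution, i.e.\ show $H^n$ of the compact dual of the compact factors vanishes. This holds because the cohomology of a compact simple Lie group is an exterior algebra on generators in odd degrees $2d_i - 1 \ge 3$, and a short degree check (using that $n \ge 4$ and the specific structure of the factors) forces $H^n$ of that part to vanish in the relevant range; I would phrase this cleanly by noting the lowest nonzero positive-degree cohomology of the compact factors exceeds what is available, or otherwise by a direct Poincaré-polynomial argument.

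Having established $\dim_\R \Hct^n(\widetilde{G}, \R) = 1$, Theorem \ref{thm:Borel-Yang} gives $H^n(\sG(k), \R) \cong \R$, so $H^n(\sG(k), \R)$ is one-dimensional. I would then transfer this from the simply connected group $\sG(k)$ to the adjoint group $\G(k)$. Since $\G$ is adjoint and $\sG \to \G$ is the universal cover, the central isogeny induces a map $\sG(k) \to \G(k)$ whose image is a finite-index (indeed normal, with abelian finite quotient) subgroup, the kernel being central. Over the field $k$ the quotient $\G(k)/\mathrm{im}(\sG(k))$ and the kernel are finite abelian groups; because we work with $\R$-coefficients, a transfer (averaging) argument shows that real group homology and cohomology are insensitive to such finite-index and central-quotient passages, so $H^n(\G(k), \R) \cong H^n(\sG(k), \R) \cong \R$. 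By the universal coefficient theorem over $\R$, $H_n(\G(k), \R)$ is also one-dimensional, and hence $H_n(\G(k))$ has rank one.

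The main obstacle I anticipate is the degree bookkeeping in the Künneth step: I must verify carefully that the compact simple factors $K_i$ of $\widetilde{G}$ contribute nothing in total degree $n$ when multiplied against the generating class of $S^n$, and in particular that they contribute nothing in degree $n$ on their own against $H^0(S^n)$. This requires knowing the precise list of possible compact factors arising for an admissible group of type $\mathrm{B}_m$ or $\mathrm{D}_m$ (these are compact forms $\SO(q_i)$ at the other real places) and checking the vanishing of their degree-$n$ real cohomology. Controlling this uniformly, rather than case by case, is the delicate part; the remaining algebra (the transfer from $\sG(k)$ to $\G(k)$ and from cohomology to homology) is routine once the cohomological dimension count is secured.
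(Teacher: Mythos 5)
Your overall strategy coincides with the paper's (compute $\Hct^n$ of $\widetilde{G}=\sG(k\otimes_\Q\R)$ via the compact dual, apply Theorem \ref{thm:Borel-Yang}, then descend from $\sG(k)$ to $\G(k)$), but two of your steps are wrong, and the first is fatal as you have set it up. The compact dual symmetric space $X_u$ of $\widetilde{G}\cong\Spin(n,1)\times(\text{compact})$ is \emph{not} $S^n\times\prod_i K_i$: in the van Est isomorphism $\Hct^\bullet(\widetilde{G},\R)\cong H^\bullet(\mathfrak{g}_\infty,K_{\max};\R)=H^\bullet(X_u,\R)$ the compact factors $K_i$ lie inside the maximal compact subgroup $K_{\max}$, so they contribute nothing (continuous cohomology of a compact group vanishes in positive degrees), and $X_u$ is just the sphere $S^n$. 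This is exactly why the paper can assert without any K\"unneth bookkeeping that $X_u$ is the $n$-sphere. Your version confuses the continuous cohomology of $K_i$ with the topological cohomology of the manifold $K_i$, and the vanishing you propose to verify (``$H^n$ of the compact factors is zero'') is in fact \emph{false} whenever $k\neq\Q$: for $n=2m-1$ odd the compact factors are compact forms of type $\tD_m$, and $H^\bullet(\Spin(2m),\R)$ is an exterior algebra with a generator in degree exactly $2m-1=n$ (for instance $\Spin(6)\cong\SU(4)$ has $H^5\neq 0$, relevant to $n=5$). So the step you flag as ``the delicate part'' cannot be carried out at all; it is the setup, not the verification, that must be corrected.

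The second error is in the descent from $\sG(k)$ to $\G(k)$. The image of $\sG(k)\to\G(k)$ is not of finite index in general: the quotient is the kernel $A$ of $H^1(k,\sC)\to H^1(k,\sG)$, and $H^1(k,\sC)$ is typically an \emph{infinite} torsion abelian group of finite exponent (already $\PGL_2(\Q)/\im\SL_2(\Q)\cong\Q^\times/(\Q^\times)^2$ is infinite), so a finite-index transfer or averaging argument does not apply. The paper instead observes that $A$, being torsion abelian of finite exponent, is a direct limit of finite groups; since homology with $\R$-coefficients commutes with direct limits, $H_p(A,-)$ vanishes for $p>0$, and the Lyndon--Hochschild--Serre spectral sequence for $1\to\sG(k)/\sC(k)\to\G(k)\to A\to 1$, combined with a second spectral sequence handling the finite central kernel $\sC(k)$, gives $\dim H_n(\G(k),\R)=\dim H_n(\sG(k),\R)$. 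Your instinct that real coefficients kill all contributions from torsion groups is correct, but the justification has to go through this direct-limit argument rather than through finiteness of the index.
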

\begin{proof}

Let $\sG$ be the simply connected (algebraic) cover of $\G$,
and denote by $\pi$ the covering map $\sG \to \G$ and by $\sC$ the center of $\sG$.
From Galois cohomology we have an exact sequence 
\begin{align}
        1    \to \sG(k)/\sC(k) \stackrel{\pi} \to \G(k) \to A \to 1, 
  \label{eq:ex-seq-sG}
\end{align}
where $A$ is defined as the kernel of the map $H^1(k, \sC) \to H^1(k, \sG)$. 
The Lyndon-Hochschild-Serre spectral sequence (see \cite[Sect.~VII.6]{Brown82}) applied to
\eqref{eq:ex-seq-sG} takes the form
\begin{align}
        E^2_{pq} = H_p(A, H_q(\sG(k)/\sC(k), \R)) \Rightarrow H_{p+q}(\G(k), \R).
        \label{eq:LHS-1}
\end{align}
In all cases we have that $H^1(k, \sC)$ is torsion abelian of finite exponent. Thus $A$
may be written as a direct limit of finite groups, and it 
follows by exchanging homology and direct limit that 
$H_p(A, -)$ is zero  in \eqref{eq:LHS-1} unless $p = 0$. This shows that $H_n(\G(k), \R)$
has the same dimension as $H_n(\sG(k)/\sC(k), \R)$. Moreover, since $\sC$ is finite, the
spectral sequence 
\begin{align}
        E^2_{pq} = H_p(\sG(k)/\sC(k), H_q(\sC(k), \R)) \Rightarrow H_{p+q}(\sG(k), \R)
        \label{eq:LHS-2}
\end{align}
further shows that this dimension equals $\dim( H_n(\sG(k), \R))$. For $\G$ admissible we
have that the compact dual symmetric space $X_u$ of $\sG(k \otimes_\Q \R)$ has
the same dimension as $\Hy^n$
(more precisely, $X_u$ is the $n$-sphere). It then follows from \eqref{eq:Borel-Yang} and \eqref{eq:H_ct-Xu} 
that $H_n(\G(k))$ has rank one.
\end{proof}


\section{Algebraic structure at cusps}
\label{sec:struct-cusps}

\subsection{}
\label{sec:G_x}

Let $G = \PO(n,1)$, and take a point $x$ on the boundary  $\Omega$ of $\Hy^n$. 
Using the upper half space model with $x = \infty$ and the description of its isometry
group as conformal maps (cf.\ for instance \cite[Ch.~A]{BenPetr}) one sees 
that the stabilizer $G_x$ decomposes as a product
\begin{align}
       G_x = U \cdot A \cdot S, 
        \label{eq:G_x}
\end{align}
where 
\begin{itemize}
        \item $U \cong \R^{n-1}$ corresponds to the horospherical translations
                fixing $x$;
        \item $A \cong \R_{>0}$ corresponds to the homotheties centered at $0$;
        \item and $S \cong \Ort(n)$ is the rotation group around the axis $(0,
                x)$.
\end{itemize}

\subsection{}
\label{sec:Bieberbach}

Let $\G$ be an admissible $k$-group, so that $\G(\R) = G = \PO(n,1)$. We suppose
that $n > 3$. Let $\Gamma \subset \G(k)$ be a torsion-free lattice, and suppose that $x
\in \Omega$ is a cusp of $\Gamma$. In particular, $\G$ must be
isotropic with $k = \Q$ (cf. Section \ref{sec:example-disc}). The ``cusp subgroup'' 
$\Gamma_x$ acts discretely and cocompactly on $\R^{n-1}$. By Bieberbach theorem there
exists a normal finite index subgroup $\Gamma_x' \subset \Gamma_x$, that 
consists only of
horospherical translations. In the notation of Section \ref{sec:G_x}: $\Gamma_x' \subset U$, with
$\Gamma_x' \cong \Z^{n-1}$. We may assume that $\Gamma_x'$ is maximal abelian.
Then there
are only finitely many possibilities for the finite group $\Gamma_x/\Gamma_x'$
(see \cite[Ch.~4~Sect.~1.1]{OniVinb-I}). It follows that  there exists an integer $N$
such that $[\Gamma_x:\Gamma_x']$ divides $N$ for every cusp $x$ of $\Gamma$.

\subsection{}
\label{sec:U-P}

Let $\UU$ be the Zariski closure of $\Gamma_x'$ in $\G$. By the construction,
$\UU$ is a
$\Q$-subgroup with $\UU(\R) = U$. Let $\P$ be the normalizer of $\UU$ in $\G$.
This is a parabolic subgroup defined over $\Q$, with $\P(\R) = G_x$. 
We have a Levi decomposition $\P =
\UU \cdot \mathbf{L}$, where $\mathbf{L}$ is a reductive $\Q$-group. We denote
by $\ZZ$ the connected component of the center of $\mathbf{L}$. 
By the construction it is a torus defined over $\Q$.



\begin{prop}
        \label{prop:action-center}
        The $\Q$-torus $\ZZ$ is one-dimensional and split.
        It acts by conjugation on $\UU$ as follows (for $g \in \ZZ$, $b \in \UU$):
        \begin{align}
                g b g^{-1} = \lambda(g) b, 
                \label{eq:lambda}
        \end{align}
        where $\lambda$ is a nontrivial $\Q$-character of $\ZZ$.
\end{prop}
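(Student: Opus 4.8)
The plan is to exploit the rank-one structure theory of $\G$ over $\Q$. Recall from Section \ref{sec:Bieberbach} that the presence of a cusp forces $k = \Q$ and $\G = \Ad\,\SO(f)$ for an isotropic quadratic form $f$ of signature $(n,1)$. Since $\G(\R) = \PO(n,1)$ has $\R$-rank one and $\G$ is $\Q$-isotropic, its $\Q$-rank equals one. The cusp $x$ is a $\Q$-rational isotropic line for $f$, and $\P$ -- its stabilizer -- is therefore a minimal (equivalently maximal proper) parabolic $\Q$-subgroup, with $\UU = \Ru(\P)$.

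First I would pin down $\ZZ$. By the Borel--Tits structure theory, the Levi $\mathbf{L} = Z_\G(\mathbf S)$ is the centralizer of a maximal $\Q$-split torus $\mathbf S \subset \ZZ$, and decomposes as an almost direct product $\mathbf{L} = \mathbf S \cdot \M$, where the anisotropic kernel $\M$ is semisimple and $\Q$-anisotropic (concretely $\M$ is the relevant form of $\SO(W)$, with $W = \langle e, e'\rangle^\perp$ the anisotropic complement of a hyperbolic plane spanned by the isotropic line and a dual vector). A semisimple group has finite center, so the connected center of $\mathbf{L}$ is exactly $\mathbf S$; hence $\ZZ = \mathbf S$ is one-dimensional and split, since $\dim \mathbf S$ equals the $\Q$-rank, namely one. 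This settles the first assertion.

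Next I would determine the conjugation action. Because $\Q\text{-rank}(\G) = 1$, the relative root system has rank one, hence is either of type $\mathrm{A}_1$ or $\mathrm{BC}_1$; correspondingly $\mathrm{Lie}(\UU)$ is the sum of the positive weight spaces $\g_\alpha \oplus \g_{2\alpha}$ for a restricted root $\alpha$ of $\ZZ = \mathbf S$. We already know from Section \ref{sec:Bieberbach} that $\UU(\R) = U \cong \R^{n-1}$ is abelian, and $\UU$ is connected, so $\UU$ is an abelian (vector) $\Q$-group; equivalently the bracket $[\g_\alpha, \g_\alpha] \subset \g_{2\alpha}$ must vanish, forcing $\g_{2\alpha} = 0$ and the system to be of type $\mathrm{A}_1$. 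Thus $\mathrm{Lie}(\UU) = \g_\alpha$ is a single $\ZZ$-weight space, on which $\ZZ$ acts through the character $\alpha$ -- and since $\dim \ZZ = 1$ this action is simply multiplication by the scalar $\alpha(g)$.

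Finally I would transport this back to $\UU$. As $\UU$ is a unipotent abelian group in characteristic zero, the exponential $\exp\colon \g_\alpha \xrightarrow{\sim} \UU$ is a $\ZZ$-equivariant isomorphism of algebraic groups carrying the linear structure and the adjoint action to the vector-group structure and the conjugation action. Therefore conjugation by $g \in \ZZ$ is scalar multiplication by $\lambda(g) := \alpha(g)$, which gives $g b g^{-1} = \lambda(g) b$; and $\lambda$ is nontrivial because $\alpha$ is a genuine restricted root. The step I expect to demand the most care is excluding the $\mathrm{BC}_1$ possibility -- that is, verifying $\g_{2\alpha} = 0$ -- which is precisely where the already-established abelianness of $\UU$ (coming from Bieberbach's theorem, via $\Gamma_x' \cong \Z^{n-1}$) does the essential work.
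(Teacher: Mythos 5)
Your proposal is correct in substance, but it takes a genuinely different route from the paper. The paper never invokes Borel--Tits structure theory: it reads everything off the explicit conformal description $G_x = U \cdot A \cdot S$ of Section \ref{sec:G_x}. Over $\R$ the connected center of the Levi is visibly $A$ extended by the rotation $-I \in S$, so $\ZZ(\R) \cong \R^\times$, and homotheties scale horospherical translations, which gives \eqref{eq:lambda} with $\lambda$ a priori defined only over $\R$. The arithmetic content of the paper's proof is then a descent argument: for $g \in \ZZ(\Q)$ and $b \in \UU(\Q)$ one has $\lambda(g)b \in \UU(\Q)$, which forces $\lambda(g) \in \Q^\times$, and Zariski-density of $\ZZ(\Q)$ in $\ZZ$ then shows $\lambda$ is defined over $\Q$, whence $\ZZ$ is $\Q$-split. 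You instead get splitness ``for free'' by identifying $\ZZ$ with a maximal $\Q$-split torus $\mathbf{S}$ (using that the anisotropic kernel is the form of $\SO(W)$, which is semisimple precisely because $\dim W = n-1 \ge 3$, i.e.\ $n > 3$ --- consistent with the paper's standing assumption), and you identify $\lambda$ with the restricted root. Your argument is more structural and would transfer to other rank-one situations; the paper's is shorter and self-contained given Section \ref{sec:G_x}.

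One step of yours is under-justified --- exactly the one you flag. From ``$\UU$ abelian'' you conclude ``$[\g_\alpha,\g_\alpha] = 0$, forcing $\g_{2\alpha} = 0$.'' The vanishing of $[\g_\alpha,\g_\alpha]$ does not formally imply $\g_{2\alpha}=0$: you are implicitly using the structure fact that $\g_{2\alpha} = [\g_\alpha,\g_\alpha]$ whenever $2\alpha$ is a relative root (equivalently, that a non-reduced rank-one relative root system forces a non-abelian unipotent radical). That fact is true, but as stated it needs a citation or a proof. Two painless repairs: (i) compute directly --- writing $f = H \perp f_0$ with $H$ the hyperbolic plane, the $\mathbf{S}$-weights on the standard representation $V$ are $1,0,\dots,0,-1$, and $\mathfrak{so}(f) \cong \wedge^2 V$ therefore has weights $0,\pm 1$ only, since the weight-$1$ subspace of $V$ is a line; (ii) observe that $\mathbf{S}$ is also a maximal $\R$-split torus (the $\R$-rank is $1$), so the $\Q$-restricted roots coincide with the $\R$-restricted roots of $\PO(n,1)$, and the description in Section \ref{sec:G_x} --- $A$ acting on $U \cong \R^{n-1}$ by homotheties, hence through a single weight --- shows the system is reduced. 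With either repair your proof is complete.
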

\begin{proof}
        The decomposition \eqref{eq:G_x} for $\P(\R)$ 
        shows that $\ZZ(\R) \cong \R^\times$, explicitly given by the
       group $A$ extended by the rotation $-I \in S$.
       This shows the existence of a nontrivial
       $\R$-isomorphism $\lambda$ of $\ZZ$ such that 
       \eqref{eq:lambda} holds for any $g \in \ZZ$ and $b \in \UU$.
       But by construction $\ZZ$ is a $\Q$-group that normalizes $\UU$, so that 
       for $g \in \ZZ(\Q)$ and $b \in \UU(\Q)$ we must have $\lambda(g) b \in
       \UU(\Q)$. This forces $\lambda(g) \in \Q^\times$. Since $\ZZ(\Q)$ is
       Zariski-dense in $\ZZ$ (see \cite[Cor.~18.3]{Bor91})
       we conclude that $\lambda$ is defined over $\Q$,
       and so $\ZZ$ is $\Q$-split.
\end{proof}

\begin{lemma}
        \label{lemma:homology-cusp-trivialized}
        Under the map induced by the inclusion, 
        $H_{n-1}(\UU(\Q))$ has trivial image in $H_{n-1}(\P(\Q))$.
\end{lemma}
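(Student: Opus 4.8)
The plan is to exploit the conjugation action of the torus $\ZZ$ on $\UU$, combined with the standard principle that inner automorphisms act trivially on group homology. First I would record the structure of the source. Since $\UU(\R) = U \cong \R^{n-1}$ is commutative and $\UU$ is a connected unipotent $\Q$-group, $\UU$ is a commutative unipotent group, hence a vector group; in particular $\UU(\Q)$ is a $\Q$-vector space of dimension $n-1$. Its integral homology in the top degree is then the top exterior power,
\[
        H_{n-1}(\UU(\Q)) \;\cong\; \Lambda^{n-1}_\Q \UU(\Q) \;\cong\; \Q,
\]
a one-dimensional $\Q$-vector space, and in particular uniquely divisible. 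The fact that this group is a $\Q$-vector space is what makes the argument work: a nonzero rational scalar will act invertibly on it.

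Next I would bring in the action recorded in Proposition \ref{prop:action-center}. Because $\P$ is the normalizer of $\UU$, the subgroup $\UU(\Q)$ is normal in $\P(\Q)$, and $\ZZ(\Q) \subset \P(\Q)$. For $g \in \ZZ(\Q)$ let $c_g$ denote conjugation by $g$, regarded both as an automorphism of $\UU(\Q)$ and as an inner automorphism $c_g^\P$ of $\P(\Q)$. Writing $i : \UU(\Q) \hookrightarrow \P(\Q)$ for the inclusion, the identity $i \circ c_g = c_g^\P \circ i$, together with the fact that inner automorphisms induce the identity on $H_\bullet(\P(\Q))$ (see \cite[III.8]{Brown82}), yields $i_* \circ (c_g)_* = i_*$ on $H_{n-1}(\UU(\Q))$. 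By \eqref{eq:lambda}, $c_g$ acts on the $\Q$-vector space $\UU(\Q)$ as the scalar $\lambda(g)$, and therefore acts on the top exterior power $H_{n-1}(\UU(\Q))$ as multiplication by $\lambda(g)^{n-1}$. Combining these gives
\[
        \bigl(\lambda(g)^{n-1} - 1\bigr)\, i_* \;=\; 0 .
\]

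It then remains to exhibit a single $g$ for which $\lambda(g)^{n-1} \neq 1$. By Proposition \ref{prop:action-center} the torus $\ZZ$ is one-dimensional and $\Q$-split, so $\ZZ \cong \mathbf{G}_{\mathrm m}$ over $\Q$ and $\ZZ(\Q) \cong \Q^\times$, while $\lambda$ is a nontrivial $\Q$-character, hence of the form $\lambda(g) = g^d$ with $d \neq 0$. Taking $g = 2$ gives $\lambda(g)^{n-1} = 2^{d(n-1)} \neq 1$, since $d(n-1) \neq 0$. As $H_{n-1}(\UU(\Q))$ is a $\Q$-vector space, the nonzero rational $\lambda(g)^{n-1} - 1$ acts invertibly on it, so the relation above forces $i_* = 0$, which is exactly the claim. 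The step I expect to require the most care is the identification of $H_{n-1}(\UU(\Q))$ as a one-dimensional $\Q$-vector space carrying the $(n-1)$st power of the scalar action of $\ZZ(\Q)$: one must make sure the homology is genuinely uniquely divisible (so that $\lambda(g)^{n-1}-1$ is invertible, not merely nonzero) and that the scalar $\lambda(g)$ on $\UU(\Q)$ is correctly transported to multiplication by $\lambda(g)^{n-1}$ on the top exterior power. The remaining ingredient, triviality of inner automorphisms on $H_\bullet(\P(\Q))$, is standard.
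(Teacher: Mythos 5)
Your proof is correct and is essentially the paper's own argument: both rest on identifying $H_{n-1}(\UU(\Q)) \cong \Q$ as a top exterior power of $\UU(\Q) \cong \Q^{n-1}$, on the scaling action by $\lambda(g)^{n-1}$ coming from Proposition \ref{prop:action-center}, and on the triviality of the conjugation action on $H_{n-1}(\P(\Q))$. The paper packages the final step as the vanishing of the coinvariants $H_{n-1}(\UU(\Q))_{\P(\Q)}$, whereas you pick a single $g \in \ZZ(\Q)$ with $\lambda(g)^{n-1} \neq 1$ and use invertibility of $\lambda(g)^{n-1}-1$ on a $\Q$-vector space; these are the same computation phrased two ways.
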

\begin{proof}
        The conjugation induces a trivial action of $\P(\Q)$ 
        on its homology $H_{n-1}(\P(\Q))$ 
        (see \cite[Prop.~II.6.2]{Brown82}), so that the map $H_{n-1}(\UU(\Q)) \to
        H_{n-1}(\P(\Q))$ factors as follows:
        \begin{align}
                \begin{split}
                \xymatrix{
                        H_{n-1}(\UU(\Q)) \ar[rd] \ar[r] &
                        H_{n-1}(\UU(\Q))_{\P(\Q)}
                        \ar[d]\\
                        & H_{n-1}(\P(\Q)),
                }
                \end{split}
        \end{align}
        where $H_{n-1}(\UU(\Q))_{\P(\Q)}$ denotes the  module of co-invariants.
Since $\UU(\Q) \cong \Q^{n-1}$ we have $H_{n-1}(\UU(\Q)) = \Q$ (see
        \cite[Th.~V.6.4]{Brown82}), and the action of $\ZZ$ on
        $H_{n-1}(\UU(\Q))$ induced by conjugation is explicitly given by 
        \begin{align*}
                (g , u) \mapsto  \left(  \lambda(g)\right)^{n-1} \cdot u.
        \end{align*}
        It easily follows that the module $H_{n-1}(\UU(\Q))_{\P(\Q)}$ 
        is trivial. 
\end{proof}

\subsection{}
\label{sec:homol-Gamma_x}

For a cusp $x \in \Omega$ of a torsion-free $\Gamma$, we have that 
$H_{n-1}(\Gamma_x) \cong \Z$ (since $\Gamma_x\bs \R^{n-1}$ is a compact manifold).
Let us denote by $[\Gamma_x] \in H_{n-1}(\Gamma_x)$ a generator.

\begin{prop}
        \label{prop:Gamma_x-torsion}
        Let $\Gamma \subset \G(\Q)$ be a nonuniform quasi-arithmetic lattice.
        There exists an integer $N$ such that for any cusp $x$
        of $\Gamma$ the image of $[\Gamma_x]$ in $H_{n-1}(\P(\Q))$ is annihilated by
        $N$.
\end{prop}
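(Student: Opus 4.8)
The plan is to compare the class $[\Gamma_x]$ with the fundamental class of the maximal translation subgroup $\Gamma_x'$, which already lives inside $\UU(\Q)$ where Lemma \ref{lemma:homology-cusp-trivialized} applies. Recall from Section \ref{sec:Bieberbach} that $\Gamma_x' \subset \Gamma_x$ is a normal subgroup of finite index $m := [\Gamma_x : \Gamma_x']$, with $\Gamma_x' \cong \Z^{n-1}$ consisting of horospherical translations so that $\Gamma_x' \subset \UU(\Q)$, and that there is an integer $N$, the same for every cusp $x$, such that $m$ divides $N$. Write $i_* : H_{n-1}(\Gamma_x') \to H_{n-1}(\Gamma_x)$ for the map induced by $\Gamma_x' \subset \Gamma_x$, and let $\phi$, $\psi$ denote the maps to $H_{n-1}(\P(\Q))$ induced by the inclusions of $\Gamma_x$, resp.\ $\Gamma_x'$, into $\P(\Q)$. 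By functoriality $\psi = \phi \circ i_*$.

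First I would compute $i_*([\Gamma_x'])$ geometrically. The inclusion $\Gamma_x' \subset \Gamma_x$ corresponds to the $m$-sheeted covering $p : \Gamma_x'\bs\R^{n-1} \to \Gamma_x\bs\R^{n-1}$ of closed orientable flat $(n-1)$-manifolds (orientability of the base being exactly the content of $H_{n-1}(\Gamma_x) \cong \Z$ from Section \ref{sec:homol-Gamma_x}). Since a covering map of degree $m$ multiplies the top fundamental class by $m$, we obtain, after fixing compatible orientations,
\begin{align*}
        i_*([\Gamma_x']) = m \, [\Gamma_x],
\end{align*}
and hence $\psi([\Gamma_x']) = \phi(i_*([\Gamma_x'])) = m\,\phi([\Gamma_x])$.

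Next I would show that $\psi([\Gamma_x']) = 0$. Since $\Gamma_x' \subset \UU(\Q)$, the inclusion $\Gamma_x' \hookrightarrow \P(\Q)$ factors through $\UU(\Q)$, so $\psi$ factors through the map $H_{n-1}(\UU(\Q)) \to H_{n-1}(\P(\Q))$; the latter is zero by Lemma \ref{lemma:homology-cusp-trivialized}. Combining the two displays gives $m\,\phi([\Gamma_x]) = 0$, and as $m$ divides $N$ we conclude that $N$ annihilates $\phi([\Gamma_x])$, the image of $[\Gamma_x]$ in $H_{n-1}(\P(\Q))$. Since $N$ does not depend on $x$, this is the assertion.

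The step that needs the most care is the identification $i_*([\Gamma_x']) = m\,[\Gamma_x]$: one has to know that the quotient $\Gamma_x\bs\R^{n-1}$ is orientable so that $[\Gamma_x]$ is a genuine generator of $H_{n-1}(\Gamma_x)\cong\Z$, and that the degree of the covering $p$ equals the index $m$. Both are guaranteed in the present (torsion-free, $\Gamma \subset G^\circ$) setting. The remainder is a formal diagram chase using functoriality of group homology together with Lemma \ref{lemma:homology-cusp-trivialized}.
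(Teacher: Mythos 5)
Your proof is correct and follows essentially the same route as the paper's: both compare $[\Gamma_x]$ with the generator of $H_{n-1}(\Gamma_x') \cong \Z$ (the paper phrases the covering-degree computation as the map $\Z \to \Z$, $1 \mapsto s$ with $s = [\Gamma_x:\Gamma_x']$), kill the image via the factorization through $H_{n-1}(\UU(\Q)) \to H_{n-1}(\P(\Q)) = 0$ from Lemma \ref{lemma:homology-cusp-trivialized}, and invoke the uniform bound $N$ on the index from Section \ref{sec:Bieberbach}. Your extra care about orientability of $\Gamma_x\bs\R^{n-1}$ and the degree of the covering is a welcome elaboration of what the paper leaves implicit, but it is not a different argument.
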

\begin{proof}
        Let $\Gamma_x' \cong \Z^{n-1}$ be the maximal abelian subgroup of $\Gamma_x$. 
        The natural map $H_{n-1}(\Gamma_x') \to H_{n-1}(\Gamma_x)$ corresponds to $\Z \to
        \Z$ with $1 \mapsto s$, where $s = [\Gamma_x : \Gamma_x']$ is the index.
        Thus a generator of $H_{n-1}(\Gamma_x')$ is mapped to $s \cdot [\Gamma_x] \in
        H_{n-1}(\Gamma_x)$. But by construction $\Gamma_x' \subset \UU(\Q)$, and applying
        Lemma \ref{lemma:homology-cusp-trivialized} we see that $s \cdot [\Gamma_x]$ has zero
        image in $H_{n-1}(\P(\Q))$. Thus choosing $N$ as in Section \ref{sec:Bieberbach} the
        result follows.
\end{proof}

\section{Conclusion of the proof}
\label{sec:conclusion}

\subsection{}
Let $\Gamma \subset \G(k)^+$ be a torsion-free quasi-arithmetic lattice. 
The $\G(k)$-module $J\Omega$ fits into the following exact sequence:
\begin{align}
        0 \to J\Omega \to \Z \Omega \to \Z \to 0.
        \label{eq:J-ex-seq}
\end{align}
From this we obtain the commutative diagram with exact rows: 
\begin{align}
        \begin{split}
\xymatrix{
        H_n(\Gamma) \ar[d] \ar[r] & {H_{n-1}(\Gamma, J\Omega)} \ar[d]^{\alpha_*} \ar[r] \ar@{->}[rd]^\varphi &  H_{n-1}(\Gamma,
        \Z\Omega)  \ar[d] \\
        H_n(\G(k)) \ar[r]^-{\delta} & H_{n-1}(\G(k),J\Omega) \ar[r] & H_{n-1}(\G(k),
        \Z\Omega).
}
        \end{split}
        \label{eq:commut-diag}
\end{align}
The vertical maps are induced by the inclusion $\alpha: \Gamma \to \G(k)$.
Recall that the fundamental class $[\Gamma]$ corresponds to a generator of $H_n(\Gamma, \Omega)
= H_{n-1}(\Gamma, J\Omega) \cong \Z$. 
Consider the map $\varphi$ defined in \eqref{eq:commut-diag}.

\begin{prop}
        Let the integer $N$ be as in Proposition \ref{prop:Gamma_x-torsion}. 
        Then $\varphi(N \cdot [\Gamma]) = 0$. 
         \label{prop:phi-0}
\end{prop}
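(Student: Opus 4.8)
The plan is to understand the map $\varphi$ as the composite $H_{n-1}(\Gamma, J\Omega) \to H_{n-1}(\Gamma, \Z\Omega) \to H_{n-1}(\G(k), \Z\Omega)$, so that showing $\varphi(N\cdot[\Gamma])=0$ amounts to tracking the fundamental class through the change of coefficients $J\Omega \hookrightarrow \Z\Omega$ followed by the corestriction $\alpha_*$. The key point is that the module $\Z\Omega$ is a \emph{permutation module}: it has as $\Z$-basis the set $\Omega$ of boundary points, on which $G$ (and hence $\G(k)$ and $\Gamma$) acts. By Shapiro's lemma the homology $H_{n-1}(S, \Z\Omega)$ decomposes as a sum of the homologies $H_{n-1}(S_x)$ of the point-stabilizers $S_x$, ranging over $S$-orbits of points $x \in \Omega$. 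For the fundamental class, the only orbits that contribute nontrivially are those consisting of cusps of $\Gamma$, since $\Gamma$ acts freely (with no fixed boundary point giving homology) on the non-cusp points; this is exactly the content encoded in the relative-homology setup of Section \ref{sec:fund-class}.

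First I would make precise, using the long exact sequence of the coefficient sequence \eqref{eq:J-ex-seq} together with the Shapiro decomposition, that the image of $[\Gamma]$ under the horizontal map $H_{n-1}(\Gamma, J\Omega) \to H_{n-1}(\Gamma, \Z\Omega)$ is represented by the collection of cusp classes $[\Gamma_x] \in H_{n-1}(\Gamma_x)$, one for each $\Gamma$-orbit of cusps $x \in \CC$. This is the geometric statement that the boundary of the fundamental class is the sum of the cusp cross-sections, each of which is the compact flat $(n-1)$-manifold $\Gamma_x \bs \R^{n-1}$ carrying its own fundamental class $[\Gamma_x]$ from Section \ref{sec:homol-Gamma_x}. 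Then I would apply the corestriction $\alpha_*$ to $\G(k)$: under the Shapiro isomorphism, corestriction sends each summand $H_{n-1}(\Gamma_x)$ to $H_{n-1}(\G(k)_x)$, where $\G(k)_x = \P(\Q)$ is the parabolic stabilizer studied in Section \ref{sec:struct-cusps} (recall $k=\Q$ in the nonuniform case).

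The decisive input is then Proposition \ref{prop:Gamma_x-torsion}: the image of each $[\Gamma_x]$ in $H_{n-1}(\P(\Q))$ is annihilated by $N$. Since $\varphi$ factors through $\alpha_*$ composed with the edge map into $H_{n-1}(\G(k), \Z\Omega)$, and the latter splits via Shapiro into the $H_{n-1}(\P(\Q))$-summands indexed by the (finitely many) $\G(k)$-orbits of cusps, multiplying $[\Gamma]$ by $N$ kills every cusp contribution simultaneously. Hence $\varphi(N\cdot[\Gamma]) = 0$.

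The main obstacle I anticipate is bookkeeping rather than conceptual: carefully verifying that the Shapiro decomposition of $H_{n-1}(-,\Z\Omega)$ is compatible with corestriction in the sense that corestriction from $\Gamma$ to $\G(k)$ genuinely induces, orbit by orbit, the corestriction from the cusp subgroup $\Gamma_x$ to its stabilizer $\P(\Q)$ in $\G(k)$ — keeping track of how $\Gamma$-orbits of cusps group into $\G(k)$-orbits, and confirming that the non-cusp boundary points contribute nothing to $H_{n-1}(\Gamma, \Z\Omega)$ because their $\Gamma$-stabilizers are trivial (as $\Gamma$ is torsion-free and acts freely off its cusps). Once this compatibility is in place, the conclusion follows immediately by invoking Proposition \ref{prop:Gamma_x-torsion} on each summand.
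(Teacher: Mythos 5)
Your proposal follows essentially the same route as the paper: decompose $H_{n-1}(-,\Z\Omega)$ via Shapiro's lemma over boundary orbits, use naturality of this decomposition with respect to the inclusion $\Gamma \subset \G(\Q)$, and invoke Proposition \ref{prop:Gamma_x-torsion} to kill the cusp summands. Two corrections, though. First, your justification that non-cusp orbits contribute nothing is inaccurate: the stabilizer in $\Gamma$ of a non-cusp boundary point need not be trivial, since it can be infinite cyclic, generated by a loxodromic element having that point as a fixed point ($\Gamma$ does not act freely on $\Omega$ off the cusps). The needed vanishing $H_{n-1}(\Gamma_x)=0$ still holds, because such a stabilizer is either trivial or isomorphic to $\Z$, whose homology vanishes in degrees $\ge 2$. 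Second, the step you single out as requiring the most care --- identifying the image of $[\Gamma]$ in $H_{n-1}(\Gamma,\Z\Omega)$ with the sum of the cusp classes $[\Gamma_x]$ --- is unnecessary. Since each cusp summand $H_{n-1}(\Gamma_x)\cong\Z$ is generated by $[\Gamma_x]$, Proposition \ref{prop:Gamma_x-torsion} already shows that the \emph{entire} image of $H_{n-1}(\Gamma,\Z\Omega)$ in $H_{n-1}(\G(\Q),\Z\Omega)$ is annihilated by $N$; so whatever element the image of $[\Gamma]$ happens to be, it is killed by $N$. This is exactly the shortcut the paper takes, and it spares you the geometric boundary computation (and the attendant sign and multiplicity bookkeeping) entirely.
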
 
\begin{proof}
 By Shapiro's lemma  the module $H_{n-1}(\Gamma, \Z\Omega)$ decomposes as a direct
 sum of modules $H_{n-1}(\Gamma_x)$, indexed by the set of $\Gamma$-orbits of points
 $x \in \Omega$.  But $H_{n-1}(\Gamma_x) = 0$ unless $x$ is a cusp, so that the sum is
 actually indexed by the quotient set $\Gamma\bs\CC$.

 Let us assume that $\G$ is isotropic (so that $k = \Q$).
 Since all cusps are conjugate by the action of $\G(\Q)$,
 we have by Shapiro's lemma:
 \begin{align*}
         H_{n-1}(\G(\Q), \Z \Omega) = H_{n-1}(\P(\Q)), 
 \end{align*}
 where $\P$ is constructed as in Section \ref{sec:U-P} for some cusp $x \in \Omega$.
 From Proposition \ref{prop:Gamma_x-torsion} we have that the image of $H_{n-1}(\Gamma,
 \Z \Omega) = \bigoplus_{x \in \Gamma\bs\CC} H_{n-1}(\Gamma_x)$
 in $H_{n-1}(\G(\Q), \Z\Omega)$ is annihilated by $N$. In particular, $N \cdot
 \varphi([\Gamma]) = 0$.
\end{proof}

\label{sec:end-of-proof}

\begin{proof}[Proof of Theorem \ref{thm:fund-class}]
        Let $L_0$ be the image of $H_n(\G(k))$ in $H_{n-1}(\G(k), \Omega)$
        under the connecting map $\delta$  (cf.\ \eqref{eq:commut-diag}).
        By Proposition \ref{prop:Borel-rk-1}, $L_0$ has rank one.
        It follows from the exactness of the second row in \eqref{eq:commut-diag}
        and Proposition 
        \ref{prop:phi-0} that $N \cdot \alpha_*([\Gamma]) \in L_0$ for 
        any torsion-free lattice $\Gamma \subset \G(k)^+$. Let $L_1$ be the
        image of $L_0$ in $H_n(G, \Omega)$, and denote by $L$ the submodule of
        $H_n(G, \Omega)$ generated by the elements $j_*([\Gamma])$ for
        torsion-free lattices $\Gamma \subset \G(k)^+$. Then $N \cdot L \subset
        L_1$, so that $\rk(L) = \rk(N \cdot L) = 1$ (note that this rank cannot
        be zero by Proposition \ref{prop:class-determ-vol}). 
\end{proof}

\bibliographystyle{amsplain}
\bibliography{vol-qa.bbl}

\end{document}